\DeclareSymbolFontAlphabet{\mathbb}{AMSb}
\DeclareSymbolFontAlphabet{\mathbbol}{bbold}
\theoremstyle{plain}
\newtheorem{theorem}{\normalfont\scshape Theorem}[section]
\newtheorem{proposition}{\normalfont\scshape Proposition}[section]
\newtheorem{lemma}[proposition]{\normalfont\scshape Lemma}
\newtheorem{corollary}[theorem]{\normalfont\scshape Corollary}
\newtheorem*{corollary*}{\normalfont\scshape Corollary}
\theoremstyle{remark}
\newtheorem*{remark*}{\normalfont\scshape Remark}
\newtheorem*{notation}{\normalfont\scshape Notation}
\numberwithin{equation}{section}
\renewcommand{\footnoterule}{
  \kern -3pt
  \hrule width 2.5in height 0.4pt
  \kern 3pt
}
\begin{document}
	
\title[ Cubic Waring--Goldbach problem with Piatetski--Shapiro primes ]
	  { Cubic Waring--Goldbach problem with Piatetski--Shapiro primes }

\author[Linji Long, Jinjiang Li, Min Zhang, Yankun Sui]
       {Linji Long \quad \& \quad Jinjiang Li \quad \& \quad Min Zhang \quad \& \quad  Yankun Sui}

\address{Department of Mathematics, China University of Mining and Technology,
         Beijing 100083, People's Republic of China}

\email{linji.long.math@gmail.com}

\address{(Corresponding author) Department of Mathematics, China University of Mining and Technology,
         Beijing 100083, People's Republic of China}

\email{jinjiang.li.math@gmail.com}

\address{School of Applied Science, Beijing Information Science and Technology University,
		 Beijing 100192, People's Republic of China  }

\email{min.zhang.math@gmail.com}

\address{College of Information Engineering, Nanjing Xiaozhuang University, Nanjing 211171,  Jiangsu
          People's Republic of China}

\email{yankun.sui.math@gmail.com}

\date{}

\footnotetext[1]{Jinjiang Li is the corresponding author. \\
  \quad\,\,
{\textbf{Keywords}}: Waring--Goldbach problem; Piatetski--Shapiro sequence; Exponential sum; Prime variable\\

\quad\,\,
{\textbf{MR(2020) Subject Classification}}: 11P05, 11P32, 11P55, 11L07, 11L20

}

\begin{abstract}
In this paper, it is proved that, for $\gamma\in(\frac{317}{320},1)$, every sufficiently large odd integer can be written as the sum of nine cubes of primes, each of which is of the form $[n^{1/\gamma}]$. This result constitutes an improvement upon the previous result of Akbal and G\"{u}lo\u{g}lu \cite{Akbal-Guloglu-2018}.
\end{abstract}

\maketitle

\section{Introduction and main result}
The famous Waring--Goldbach problem in additive number theory states that every large integers $N$ satisfying appropriate congruent conditions should be represented as the sum of $s$ $k$--th powers of prime numbers, i.e.,
\begin{equation}\label{Waring-Goldbach-General}
N=p_1^k+p_2^k+\dots+p_s^k.
\end{equation}
A formal application of the Hardy--Littlewood method suggests that whenever $s$ and $k$ are natural numbers with
$s\geqslant k+1$, then (\ref{Waring-Goldbach-General}) holds with the expected asymptotic formula (weighted form)
\begin{equation}\label{Waring-Goldbach-asymp}
\sum_{N=p_1^k+\dots+p_s^k}\prod_{j=1}^s\log p_j=\mathfrak{S}_{k,s}(N)\frac{\Gamma^s(1+1/k)}{\Gamma(s/k)}
N^{s/k-1}+O\bigg(\frac{N^{s/k-1}}{\log^{A}N}\bigg).
\end{equation}
With this expectation in mind, consider a natural number $k$ and prime number $p$, and define $\theta=\theta(k,p)$ to be the integer with $p^\theta|k$ but
$p^{\theta+1}\nmid k$, and $\mathfrak{s}=\mathfrak{s}(k,p)$ by
\begin{equation*}
\mathfrak{s}(k,p)=
\begin{cases}
\theta+2, & \textrm{if $p=2$ and $\theta>0$,}  \\
\theta+1, & \textrm{otherwise.}
\end{cases}
\end{equation*}
We then put
\begin{equation*}
 K(k)=\prod_{(p-1)|k}p^{\mathfrak{s}},
\end{equation*}
and denote by $H(k)$ the least integer $s$ such that every sufficiently large positive integer congruent to $s$ modulo $K(k)$ may be written as in the shape of (\ref{Waring-Goldbach-General}), with $p_1,\dots,p_s$ prime numbers. We note that the local conditions introduced in the definition of $H(k)$ are designed to exclude the degenerate situations in which one or more variables might, otherwise, be forced to be prime divisors of $K(k)$.
In such circumstances, the representation problem at hand reduces to a similar problem of Waring--Goldbach type in fewer variables.

The first general bound for $H(k)$ was obtained by Hua \cite{Hua-1938}, who showed that
\begin{equation}\label{Hua-upper-bound}
H(k)\leqslant2^k+1, \qquad (k\geqslant1).
\end{equation}
This result, which generalizes Vinogradov's celebrated three primes theorem \cite{Vinogradov-1937}, remains
the best known bound on $H(k)$ for $k=1,2,3$. When $k\geqslant4$, on the other hand, the
bound (\ref{Hua-upper-bound}) has been sharpened considerably.

Let $\gamma\in(\frac{1}{2},1)$ be a fixed real number. The Piatetski--Shapiro sequences are sequences of the form
\begin{equation*}
 \mathscr{N}_{\gamma}:=\big\{[n^{1/\gamma}]:\,n\in \mathbb{N}^+\big\}.
\end{equation*}
Such sequences have been named in honor of Piatetski--Shapiro, who \cite{Piatetski-Shapiro-1953}, in
1953, proved that $\mathscr{N}_{\gamma}$ contains infinitely many primes provided that $\gamma\in(\frac{11}{12},1)$. The prime numbers of the form $p=[n^{1/\gamma}]$ are called \textit{Piatetski--Shapiro primes of type $\gamma$}. More precisely, for such $\gamma$ Piatetski--Shapiro \cite{Piatetski-Shapiro-1953} showed that the counting function
\begin{equation*}
 \pi_\gamma(x):=\#\big\{\textrm{prime}\,\, p\leqslant x:\,p=[n^{1/\gamma}]\,\,\textrm{for some}\,\,
 n\in\mathbb{N}^+ \big\}
\end{equation*}
satisfies the asymptotic property
\begin{equation*}
\pi_{\gamma}(x)=\frac{x^{\gamma}}{\log x}(1+o(1))
\end{equation*}
as $x\to\infty$. Since then, the range for $\gamma$ of the above asymptotic formula in which it is known that $\mathscr{N}_{\gamma}$ contains infinitely many primes has been enlarged many times (e.g., see the literatures \cite{Kolesnik-1967,Leitmann-1975,Leitmann-1980,Heath-Brown-1983,Kolesnik-1985,Liu-Rivat-1992,
Rivat-1992,Rivat-Sargos-2001}) over the years and is currently known to hold for all $\gamma\in(\frac{2426}{2817},1)$ thanks to Rivat and Sargos \cite{Rivat-Sargos-2001}. Rivat and Wu \cite{Rivat-Wu-2001} also showed that there exist infinitely many Piatetski--Shapiro primes for $\gamma\in(\frac{205}{243},1)$ by showing a lower bound of $\pi_\gamma(x)$ with the expected order of magnitude. We remark that if $\gamma>1$ then $\mathscr{N}_\gamma$ contains all natural numbers, and hence all primes, particularly.

Based on the previous result, it is natural to investigate the Waring--Goldbach problem with prime variables restricted to Piatetski--Shapiro set. For the linear case, in 1992, Balog and Friedlander \cite{Balog-Friedlander-1992} firstly found an asymptotic formula for the number of solutions of the equation (\ref{Waring-Goldbach-General}) with three variables restricted to the Piatetski--Shapiro primes. An interesting corollary of their theorem is that every sufficiently large odd integer can be written as the sum of three Piatetski--Shapiro primes of type $\gamma$, provided that $\gamma\in(\frac{20}{21},1)$. Afterwards, their studies in this direction were subsequently continued by Jia \cite{Jia-1995} and by Kumchev \cite{Kumchev-1997}, and generalized by Cui \cite{Cui-2004} and Li and Zhang \cite{Li-Zhang-2018}, consecutively and respectively.

In 1998, Zhai \cite{Zhai-1998} considered the hybrid problem of quadratic Waring--Goldbach problem with each prime variable restricted to Piatetski--Shapiro sets. To be specific, he proved that, for $\gamma\in(\frac{43}{44},1)$ fixed, every sufficiently large integer $N$ satisfying $N\equiv5\pmod{24}$ can be written as five squares of primes with each prime of the form $[n^{1/\gamma}]$. Later, in 2005, Zhang and Zhai \cite{Zhang-Zhai-2005} improved the result of Zhai \cite{Zhai-1998} and enlarge the range to $\frac{249}{256} <\gamma<1$.

Afterwards, in 2018, Akbal and G\"{u}lo\u{g}lu \cite{Akbal-Guloglu-2018} considered the case $k\geqslant3$ and the solvability of (\ref{Waring-Goldbach-General}) with each prime variable restricted to Piatetski--Shapiro sets. They show that the number of representations $\mathcal{R}_{k,s}^{(\gamma)}(N)$ of a positive integer $N$ as in (\ref{Waring-Goldbach-General}) satisfies
\begin{equation*}
\mathcal{R}_{k,s}^{(\gamma)}(N)=\mathfrak{S}_{k,s}(N)\frac{\Gamma^s(1+\gamma/k)}{\Gamma(\gamma s/k)}
\frac{N^{\gamma s/k-1}}{\log^s N}+o\bigg(\frac{N^{\gamma s/k-1}}{\log^s N}\bigg),
\end{equation*}	
where $\mathfrak{S}_{k,s}(N)$ defined in (\ref{Waring-Goldbach-asymp}) is the singular series in the classical Waring--Goldbach problem, provided that $\gamma$ is a fixed number satisfying $\gamma_k<\gamma<1$ with explicit value for each $k\geqslant3$. When $k=3$ and $s=9$, Akbal and G\"{u}lo\u{g}lu \cite{Akbal-Guloglu-2018} showed that
$\gamma_3=\frac{1331}{1334}$, which means every sufficiently large odd integer $N$ can be written as nine cubes of primes with each prime of the form $[n^{1/\gamma}]$, provided that $\frac{1331}{1334}<\gamma<1$.

In this paper, we shall improve the result of Akbal and G\"{u}lo\u{g}lu \cite{Akbal-Guloglu-2018} for $k=3$ and $s=9$, and establish the following theorem.

\begin{theorem}\label{Theorem-1}
Let $\gamma_1,\gamma_2,\dots,\gamma_9$ be fixed subject to $1/2<\gamma_i<1$. Suppose that for $i=1,2,\dots,9$, there holds
\begin{equation*}
\frac{80}{3}(1-\gamma_i)+\frac{80}{3}\Delta_i<1,
\end{equation*}
where
\begin{equation*}
\Delta_i =
\begin{cases}
\displaystyle\frac{3}{8}\sum\limits_{j=i+1}^9(1-\gamma_j), & \textrm{if $1\leqslant i\leqslant 8$,}  \\
\hspace{3em} 0, & \textrm{if \hspace{0.7em} $i=9$.}
\end{cases}
\end{equation*}
Then, for any $A>0$ and with $\mathfrak{S}_{3,9}(N)$ defined by (\ref{Waring-Goldbach-asymp}), one has
\begin{align}\label{eq1_01}
          \mathscr{T}(N)
:= & \,\, \frac{1}{\gamma_1\gamma_2\cdots\gamma_9}
          \sum_{\substack{N=p_1^3+p_2^3+\dots+p_9^3 \\ p_i=[n^{1/\gamma_i}] \\ i=1,2,\dots,9 }}
          p_1^{1-\gamma_1}(\log p_1)p_2^{1-\gamma_2}(\log p_2)\cdots p_9^{1-\gamma_9}(\log p_9)
                 \nonumber \\
  = & \,\, \frac{\Gamma^9(4/3)}{\Gamma(3)} \mathfrak{S}_{3,9}(N){N^{2}}+O\bigg(\frac{N^{2}}{\log^A N}\bigg).
\end{align}
\end{theorem}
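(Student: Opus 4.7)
The plan is to convert the Piatetski--Shapiro condition on each prime into a sawtooth correction and then apply the Hardy--Littlewood circle method tailored to nine cubes of primes, reducing the problem to bounding exponential sums that mix the PS phase $hp^{\gamma_i}$ with the polynomial phase $\alpha p^3$.

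First I would eliminate the restriction $p_i = [n^{1/\gamma_i}]$. Writing $\psi(t) = t - [t] - 1/2$ for the sawtooth function and observing that the number of integers $n$ with $[n^{1/\gamma_i}] = p$ equals $[(p+1)^{\gamma_i}] - [p^{\gamma_i}]$, one has
\begin{equation*}
[(p+1)^{\gamma_i}] - [p^{\gamma_i}] = \gamma_i p^{\gamma_i - 1} + \psi(p^{\gamma_i}) - \psi((p+1)^{\gamma_i}) + O(p^{\gamma_i - 2}).
\end{equation*}
Multiplied by the weight $p^{1-\gamma_i}(\log p)/\gamma_i$ occurring in $\mathscr{T}(N)$, the $i$-th factor in the summand becomes $\log p_i + w_i(p_i) + O(p_i^{-1}\log p_i)$, where $w_i(p) := \gamma_i^{-1}p^{1-\gamma_i}(\log p)\bigl(\psi(p^{\gamma_i}) - \psi((p+1)^{\gamma_i})\bigr)$. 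Expanding the nine-fold product decomposes $\mathscr{T}(N)$ as a sum over subsets $\mathcal{J} \subseteq \{1,\ldots,9\}$; the $\mathcal{J} = \emptyset$ term is the classical weighted representation sum $\sum_{N=p_1^3+\cdots+p_9^3} \prod_j \log p_j$, whose asymptotic $\Gamma^9(4/3)/\Gamma(3)\cdot\mathfrak{S}_{3,9}(N)\,N^2 + O(N^2/\log^A N)$ is already known from the classical Waring--Goldbach theory for nine cubes of primes. It thus suffices to show that every $\mathcal{J} \neq \emptyset$ contribution is $O(N^2/\log^A N)$.

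For each nonempty $\mathcal{J}$ I would detect $N = p_1^3+\cdots+p_9^3$ by Fourier inversion, expressing the contribution of $\mathcal{J}$ as $\int_0^1 \prod_{i \in \mathcal{J}} G_i(\alpha)\prod_{j\notin \mathcal{J}} T_j(\alpha)\, e(-N\alpha)\,d\alpha$, where $T_j(\alpha) = \sum_{p\sim P_j}(\log p) e(\alpha p^3)$ and $G_i(\alpha) = \sum_{p \sim P_i} w_i(p)\, e(\alpha p^3)$, with $P_j \asymp N^{1/3}$ from dyadic decomposition. The Hardy--Littlewood dissection $[0,1] = \mathfrak{M} \cup \mathfrak{m}$ is chosen at a level suitable for nine cubes. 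On $\mathfrak{M}$ the sums $T_j$ are well approximated by their expected main terms while $G_i$ must be shown small using the oscillation of $\psi$; on $\mathfrak{m}$ the combination of a Vinogradov-type sup-norm bound on $T_j$ with Hua's eighth moment $\int_0^1 |T_j(\alpha)|^8\, d\alpha \ll P_j^5 \log^C P_j$, together with bounds on $|G_i|$, yields the required $\log^{-A}$ saving. To treat $G_i$ I would apply Vaaler's trigonometric approximation
\begin{equation*}
\psi(y) = \sum_{1\leqslant |h|\leqslant H} c_h\, e(hy) + O\bigl(\Delta_H(y)\bigr), \qquad |c_h| \ll \min\bigl(|h|^{-1},\, H^{-1}\bigr),
\end{equation*}
with Vaaler's auxiliary $\Delta_H$ controlled by a parallel $H^{-1}$ expansion. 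Substitution reduces $G_i(\alpha)$ to a linear combination over $|h| \leqslant H$ of hybrid sums
\begin{equation*}
F_i(\alpha, h) = \sum_{p \sim P_i}(\log p)\, p^{1-\gamma_i}\bigl(e(h p^{\gamma_i}) - e(h(p+1)^{\gamma_i})\bigr)\, e(\alpha p^3),
\end{equation*}
and the mean-value theorem in $y$ gives $F_i(\alpha, h) \approx h \sum_p (\log p)\, e(\alpha p^3 + h \xi_p^{\gamma_i})$ with $\xi_p \in (p, p+1)$. After a Vaughan decomposition (or a Heath-Brown identity) each inner sum becomes a superposition of Type I and Type II bilinear exponential sums in $h$ and the prime variable, to be estimated by van der Corput exponent-pair machinery.

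The main obstacle is the pointwise bound on these bilinear sums in the regime where the polynomial phase $\alpha p^3$ and the PS phase $h p^{\gamma_i}$ are of comparable magnitude, so that neither can be treated as a perturbation of the other and the stationary-phase analysis becomes delicate. Optimizing the exponent-pair estimates in this regime produces the constant $80/3$ appearing in the hypothesis: the term $\frac{80}{3}(1-\gamma_i)$ records the direct cost of resolving the PS phase on the $i$-th variable, while the cascading $\Delta_i = \frac{3}{8}\sum_{j > i}(1-\gamma_j)$ records the cost of successive Weyl--Cauchy--Schwarz differencing steps that transfer part of the $i$-th PS constraint onto the remaining coordinates. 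The core technical work is to verify, by induction on $i = 9, 8, \ldots, 1$, that each stage produces a genuine saving of an arbitrarily large power of $\log N$, and that the nine individual savings combine to bound the $\mathcal{J} \neq \emptyset$ contribution by $O(N^2/\log^A N)$ for any fixed $A > 0$.
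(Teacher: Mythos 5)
Your overall plan — convert the Piatetski--Shapiro constraint into a sawtooth, peel off the classical weighted sum as the main term, and estimate hybrid exponential sums mixing $\alpha p^3$ with $hp^{\gamma_i}$ via a prime-decomposition identity plus van der Corput — is the right family of techniques. But the organization you propose is genuinely different from the paper's and, more importantly, several of your structural claims would not hold up, so I'll flag both the differences and the gaps.

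\textbf{The paper does not use a major/minor arc dissection for the error term.} The decisive technical fact is that the bound on the sawtooth-corrected sum is \emph{uniform in $\alpha\in[0,1]$}: Proposition \ref{Proposition3_1} gives $\sup_\alpha|F_i(\alpha)-G(\alpha)|\ll N^{\frac{1}{3}(1-\Delta_i)-2\varepsilon}$, and the error is then killed purely by H\"older applied to the $L^1$-integral of the remaining factors. The reason the bound can be made uniform --- and this is the observation your proposal misses --- is that the polynomial phase $m^3k^3\alpha$ has \emph{vanishing fourth derivative in each variable}. After the Heath--Brown identity, the van der Corput fourth-derivative test (Lemma \ref{Van-der-Corput-k-derivatives} with $q=2$) only ``sees'' the Piatetski--Shapiro phase $h(mk+u)^\gamma$, whose fourth derivative is $\asymp HX^\gamma K^{-4}$, independent of $\alpha$. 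This entirely sidesteps the ``comparable magnitude'' difficulty you identify as the main obstacle; the stationary-phase interaction between the two phases never has to be analyzed, and the arc dissection is unnecessary machinery.

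\textbf{The decomposition and bookkeeping differ.} The paper telescopes $F_1\cdots F_9-G^9$ into exactly nine terms, each containing a single factor $F_i-G$, whereas you expand over $2^9$ subsets $\mathcal{J}$. The telescoping is essential for the bookkeeping: with exactly one ``difference'' factor taken out by a sup bound, the remaining eight factors can all be handled by an eighth-power H\"older step, using Hua's bound $\int_0^1|G|^8\,\mathrm{d}\alpha\ll N^{5/3+\varepsilon}$ for $j<i$ and the new eighth-moment estimate of Lemma \ref{Lemma2_2}, $\int_0^1|F_j|^8\,\mathrm{d}\alpha\ll N^{8/3-\gamma_j+\varepsilon}=N^{5/3+(1-\gamma_j)+\varepsilon}$, for $j>i$. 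With your full expansion, terms with $|\mathcal{J}|\geqslant2$ contain several sawtooth factors simultaneously, and you would need moment estimates for products of $G_i$'s --- the proposal doesn't address how to control these, and the needed moment inequality for the Piatetski--Shapiro exponential sum is precisely the nontrivial Lemma \ref{Lemma2_2}, which you do not mention.

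\textbf{The interpretation of $\Delta_i$ is wrong, and so is the quality of saving you claim is needed.} The quantity $\Delta_i=\frac{3}{8}\sum_{j>i}(1-\gamma_j)$ does not arise from ``cascading Weyl--Cauchy--Schwarz differencing steps''; it is the cumulative \emph{loss} incurred in the H\"older step, because each factor $F_j$ with $j>i$ costs $(1-\gamma_j)/8$ more than a factor $G$ does, so the $L^1$-integral of the remaining eight factors is $\ll N^{5/3+\Delta_i/3+\varepsilon}$. To beat $N^2/\log^A N$ one must then save a full power $N^{\Delta_i/3+2\varepsilon}$ on $\sup_\alpha|F_i-G|$ over the trivial $N^{1/3}$. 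A ``saving of an arbitrarily large power of $\log N$'' as you claim would not close the argument. The constant $80/3$ does indeed come from optimizing the van der Corput/exponent-pair bounds in Propositions \ref{Proposition4_1} and \ref{Proposition4_2}, but the argument is not an induction on $i$; each $i$ is handled independently by the same uniform sup estimate with a different $\Delta$-parameter.
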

This means in particular that we may require the nine summands to be Piatetski--Shapiro primes of distinct type. However, by choosing $\gamma_1=\gamma_2=\dots=\gamma_9=\gamma$ we obtain the following conclusion.
\begin{corollary}\label{Corollary1_2}
For any fixed $317/320 <\gamma < 1$, the primes $p$ of the form $[n^{1/\gamma}]$ have the property that every sufficiently large odd integer can be written as the sum of nine cubes of them.
\end{corollary}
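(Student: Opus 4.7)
The plan is to obtain Corollary \ref{Corollary1_2} as a direct consequence of Theorem \ref{Theorem-1} by specialising $\gamma_1=\gamma_2=\cdots=\gamma_9=\gamma$. Two things need to be checked: that the nine admissibility constraints of the theorem reduce precisely to $\gamma>317/320$, and that the main term in (\ref{eq1_01}) is strictly positive, so that $\mathscr{T}(N)>0$ forces an honest representation.

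For the first point, with a common exponent $\gamma$ the quantity $\Delta_i$ collapses to $\frac{3}{8}(9-i)(1-\gamma)$ for $1\leq i\leq 9$ (this formula is also consistent with $\Delta_9=0$), and the hypothesis of Theorem \ref{Theorem-1} becomes
\[
\frac{80}{3}(1-\gamma)\!\left(1+\tfrac{3(9-i)}{8}\right)<1\qquad(1\leq i\leq 9).
\]
The parenthesised factor is largest when $i=1$, taking the value $1+3=4$, so the binding inequality is $\frac{320}{3}(1-\gamma)<1$, equivalent to $\gamma>317/320$. All other cases (including the weaker $\gamma>77/80$ at $i=9$) are then automatic, so the asymptotic formula (\ref{eq1_01}) is available for every $\gamma$ in the range of the corollary.

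For the second point, recall that for $k=3$ the only prime $p$ with $(p-1)\mid 3$ is $p=2$, at which $\theta=0$ and $\mathfrak{s}=1$, whence $K(3)=2$; the congruence condition on $N$ attached to the Waring--Goldbach problem with nine cubes is therefore simply that $N$ be odd. A standard analysis of the singular series via its absolutely convergent Euler product (cf.\ Hua's treatment of nine cubes of primes) shows that $\mathfrak{S}_{3,9}(N)\gg 1$ uniformly over all sufficiently large odd $N$, since each local density at a prime $p\nmid 6$ is $1+O(p^{-3/2})$ and the factors at $p\in\{2,3\}$ are positive whenever $N$ is odd.

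Combining these two ingredients, Theorem \ref{Theorem-1} yields $\mathscr{T}(N)\gg N^{2}$ for every sufficiently large odd $N$. Since $\mathscr{T}(N)$ is a sum of nonnegative weights running over precisely those representations $N=p_1^3+\cdots+p_9^3$ with each $p_j=[n_j^{1/\gamma}]$, its strict positivity forces at least one such representation to exist, which is the content of Corollary \ref{Corollary1_2}. I anticipate no serious obstacle in this deduction: the entire analytic difficulty has been absorbed into Theorem \ref{Theorem-1}, and what remains is elementary arithmetic together with a textbook positivity statement for the cubic Waring--Goldbach singular series.
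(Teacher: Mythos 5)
Your proposal is correct and coincides with the paper's (terse) deduction: the paper obtains Corollary \ref{Corollary1_2} simply by specialising $\gamma_1=\cdots=\gamma_9=\gamma$ in Theorem \ref{Theorem-1}, observing that the binding constraint occurs at $i=1$ and yields $\gamma>317/320$, and invoking the standard positivity $\mathfrak{S}_{3,9}(N)\gg1$ for odd $N$ (with $K(3)=2$). Your write-up merely makes these routine verifications explicit, so it matches the paper's argument.
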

Also, by choosing $\gamma_1=\gamma_2=\cdots=\gamma_8=1$ in Theorem \ref{Theorem-1}, we derive another corollary.
\begin{corollary}\label{Corollary1_3}
For any fixed $77/80<\gamma<1$, every sufficiently large odd integer can be written as the sum of nine cubes of primes, one of which is of the form $[n^{1/\gamma}]$.
\end{corollary}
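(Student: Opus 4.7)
The plan is to derive Corollary~\ref{Corollary1_3} as a direct specialisation of Theorem~\ref{Theorem-1} by taking $\gamma_1=\gamma_2=\cdots=\gamma_8=1$ and $\gamma_9=\gamma$, where $\gamma_i=1$ is to be interpreted as the removal of any Piatetski--Shapiro restriction on the $i$-th variable. With this convention the weights $p_i^{1-\gamma_i}/\gamma_i$ reduce to $1$ for $i\leqslant 8$, the condition $p_i=[n^{1/\gamma_i}]$ merely says that $p_i$ is prime, and so the weighted count $\mathscr{T}(N)$ becomes
\begin{equation*}
\mathscr{T}(N)=\frac{1}{\gamma}\sum_{\substack{N=p_1^3+\cdots+p_9^3\\ p_9=[n^{1/\gamma}]}} p_9^{1-\gamma}(\log p_1)(\log p_2)\cdots(\log p_9).
\end{equation*}
Any positive lower bound on $\mathscr{T}(N)$ then guarantees at least one representation of $N$ as a sum of nine cubes of primes with $p_9\in\mathscr{N}_\gamma$, which is exactly what the corollary asserts.

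Next I would verify that, under this specialisation, the nine inequalities in the hypothesis of Theorem~\ref{Theorem-1} collapse to the single condition $\gamma>77/80$. Since $1-\gamma_i=0$ for $i\leqslant 8$ and $1-\gamma_9=1-\gamma$, a direct calculation yields $\Delta_i=\frac{3}{8}(1-\gamma)$ for $i=1,\ldots,8$ and $\Delta_9=0$, so the inequality $\frac{80}{3}(1-\gamma_i)+\frac{80}{3}\Delta_i<1$ reads $10(1-\gamma)<1$ for $i\leqslant 8$ and $\frac{80}{3}(1-\gamma)<1$ for $i=9$. The latter is the more restrictive and produces exactly the range $\gamma>1-3/80=77/80$ stated in the corollary.

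Having verified the hypotheses, the asymptotic~(\ref{eq1_01}) supplies
\begin{equation*}
\mathscr{T}(N)=\frac{\Gamma^9(4/3)}{\Gamma(3)}\mathfrak{S}_{3,9}(N)N^{2}+O\bigg(\frac{N^{2}}{\log^A N}\bigg).
\end{equation*}
To convert this asymptotic into the desired representation statement I invoke the standard lower bound $\mathfrak{S}_{3,9}(N)\gg 1$, valid for every sufficiently large odd $N$. Indeed, for $k=3$ the only prime $p$ satisfying $(p-1)\mid k$ is $p=2$, giving $K(3)=2$ and the sole nontrivial local obstruction $N\equiv 1\pmod{2}$; the $p$--adic densities at every other prime are bounded away from $0$ by routine computations for the cubic Waring--Goldbach problem. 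Taking $A=1$, say, then forces $\mathscr{T}(N)>0$ for every sufficiently large odd $N$, completing the proof.

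The only delicate point is that $\gamma_i=1$ formally lies outside the interval $(1/2,1)$ permitted in Theorem~\ref{Theorem-1}, so strictly speaking I must justify the boundary case. I expect this to be harmless: either one argues by continuity, sending $\gamma_i\to 1^-$ and noting that both the main term and the implied constant can be made uniform on any compact subset of the admissible region, or, more transparently, one inspects the proof of Theorem~\ref{Theorem-1} and observes that a variable with $\gamma_i=1$ carries no Piatetski--Shapiro filter, so the corresponding exponential sum reduces to the classical $\sum_{p}(\log p)e(\alpha p^3)$ and admits the stronger Vinogradov--Hua estimates. Either route is routine and constitutes essentially the only work beyond unpacking the notation of the theorem.
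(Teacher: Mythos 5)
Your derivation is correct and is exactly the route the paper takes: the paper itself obtains Corollary~\ref{Corollary1_3} by setting $\gamma_1=\cdots=\gamma_8=1$ in Theorem~\ref{Theorem-1}, and your arithmetic ($\Delta_i=\tfrac38(1-\gamma)$ for $i\leqslant 8$, $\Delta_9=0$, the $i=9$ inequality $\tfrac{80}{3}(1-\gamma)<1$ being the binding one, yielding $\gamma>77/80$), together with the standard positivity of $\mathfrak{S}_{3,9}(N)$ for odd $N$, matches what the paper tacitly invokes. Your remark about the boundary case $\gamma_i=1$ lying outside the stated hypothesis $\gamma_i\in(1/2,1)$ is a legitimate technicality the paper glosses over; your resolution — that $F_i=G$ when $\gamma_i=1$, so the corresponding summand in the telescoping decomposition \eqref{eq3_1} vanishes identically — is the right observation and is in fact simpler than a limiting argument.
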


\begin{remark*}
In order to compare our result with the previous result of Akbal and G\"{u}lo\u{g}lu \cite{Akbal-Guloglu-2018}, we list the numerical result as follows
\begin{equation*}
\frac{317}{320}=0.990625,\qquad \qquad \frac{1331}{1334}=0.997751\dots.
\end{equation*}
\end{remark*}

\begin{notation}
Throughout this paper, $N$ is a sufficiently large integer, $\varepsilon$ is a sufficiently small positive number. Let $p$, with or without subscripts, always denote a prime number. We use $[t],\{t\}$ and $\|t\|$ to denote the integral part of $t$, the fractional part of $t$ and the distance from $t$ to the nearest integer, respectively. As usual, $\Lambda(n),\mu(n)$ and $\tau_j(n)$ denote von Mangoldt's function, Mobius' function, and the $j$--dimensional divisor function, respectively. Especially, we denote by $\tau(n)=\tau_2(n)$ the classical Dirichlet's divisor function.
We write $\psi(t)=t-[t]-1 / 2, e(t)=\exp(2\pi it)$. The notation $n \sim Y$ means that $n$ runs through a subinterval of $(Y, 2 Y]$, whose endpoints are not necessarily the same in the different occurrences and may depend on the outer summation variables. $f(x) \ll g(x)$ means that $f(x)=O(g(x))$; $f(x) \asymp g(x)$ means that $f(x)\ll g(x)\ll f(x)$.
\end{notation}


\section{Preliminary Lemmas}

In this section, we shall demonstrate some lemmas which are necessary for establishing Theorem \ref{Theorem-1}.

\begin{lemma}\label{Intersection-of-PS-Primes}
Suppose that $11/12<\gamma<1$. Then one has
\begin{equation*}
\pi_\gamma(x):=\sum_{\substack{p\leqslant x\\ p=[n^{1/\gamma}]}}1=\frac{x^{\gamma}}{\log x}(1+o(1)).
\end{equation*}
\end{lemma}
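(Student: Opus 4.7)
My plan is to follow the classical Piatetski--Shapiro strategy: detect the condition $p=[n^{1/\gamma}]$ by means of the sawtooth function $\psi$, extract a main term, and bound the resulting exponential sum over primes.

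First, for $p$ large enough one has $p=[n^{1/\gamma}]$ for some $n\in\mathbb{N}^+$ if and only if the half-open interval $[p^\gamma,(p+1)^\gamma)$ contains an integer. Since this interval has length $(p+1)^\gamma-p^\gamma\ll p^{\gamma-1}<1$ for large $p$, it contains at most one integer, and the indicator of this event equals $[-p^\gamma]-[-(p+1)^\gamma]$. Using the identity $[t]=t-\tfrac12-\psi(t)$, this indicator becomes
$$(p+1)^\gamma-p^\gamma+\psi(-(p+1)^\gamma)-\psi(-p^\gamma).$$
Summing over primes $p\leqslant x$ therefore yields
$$\pi_\gamma(x)=\sum_{p\leqslant x}\bigl((p+1)^\gamma-p^\gamma\bigr)+\sum_{p\leqslant x}\bigl(\psi(-(p+1)^\gamma)-\psi(-p^\gamma)\bigr)+O(1),$$
so I only need to handle these two pieces.

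For the main term, Taylor expansion gives $(p+1)^\gamma-p^\gamma=\gamma p^{\gamma-1}+O(p^{\gamma-2})$, and Abel summation together with the prime number theorem in the form $\pi(t)=t/\log t+O(t/\log^2 t)$ leads to
$$\sum_{p\leqslant x}\bigl((p+1)^\gamma-p^\gamma\bigr)=\gamma\int_2^x\frac{t^{\gamma-1}}{\log t}\,dt+O\!\left(\frac{x^{\gamma}}{\log^2 x}\right)=\frac{x^\gamma}{\log x}\bigl(1+o(1)\bigr),$$
which supplies the desired asymptotic.

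The real work is to establish the error estimate
$$\sum_{p\leqslant x}\psi(-p^\gamma)=o\!\left(\frac{x^\gamma}{\log x}\right),$$
and this is the main obstacle. I would apply Vaaler's trigonometric-polynomial approximation
$$\psi(t)=-\sum_{1\leqslant|h|\leqslant H}a_h(H)\,e(ht)+R(t,H),\qquad |a_h(H)|\ll\tfrac{1}{|h|},$$
with the remainder majorised by a similar polynomial of size $O(1/H)$, and choose $H$ of the order $x^{1-\gamma}\log^{C}x$ so that the $R(t,H)$-contribution is acceptable. This reduces the task to non-trivially bounding the exponential sums over primes
$$S_\gamma(h;x)=\sum_{p\leqslant x}e(hp^\gamma),\qquad 1\leqslant h\leqslant H.$$
To treat $S_\gamma(h;x)$ I would pass to the von Mangoldt weighted sum and invoke Vaughan's identity, decomposing it into Type I sums $\sum_{m\sim M}a_m\sum_{n}e(h(mn)^\gamma)$ and Type II bilinear sums $\sum_{m\sim M}\sum_{n\sim N}a_m b_n\,e(h(mn)^\gamma)$. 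Each inner sum has a fractional-power phase and is amenable to the van der Corput $AB$-processes, equivalently to exponent-pair estimates (the pair $(\tfrac12,\tfrac12)$ after a Weyl shift in the Type II case). Balancing the admissible Type I and Type II ranges produces precisely the constraint $\gamma>11/12$, which is exactly the barrier in Piatetski--Shapiro's original argument; under this hypothesis one gets $S_\gamma(h;x)\ll x^\gamma\log^{-B}x$ for any fixed $B>0$, uniformly for $h$ in the relevant range, and summing against $|a_h(H)|\ll1/h$ contributes only an extra $\log x$, leaving the error term $o(x^\gamma/\log x)$ and completing the proof.
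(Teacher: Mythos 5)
The paper does not actually prove this lemma: the ``proof'' is simply a citation to Theorem~2 of Piatetski--Shapiro's 1953 paper, so there is no internal argument for you to match. Your sketch is a reasonable reconstruction of the argument behind that citation, and its overall architecture (detect $p=[n^{1/\gamma}]$ via the integer count in $[p^\gamma,(p+1)^\gamma)$, split off a smooth main term, apply a trigonometric approximation of $\psi$, then decompose the resulting prime exponential sums via Vaughan's identity and estimate Type~I/II pieces with van der Corput) is exactly the classical Piatetski--Shapiro machinery, and is also the engine the paper itself uses in Section~3 for its $\alpha$-twisted analogue.

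One structural deviation is worth flagging. You propose to bound $\sum_{p\leqslant x}\psi(-p^\gamma)$ \emph{on its own}, whereas Piatetski--Shapiro, Heath--Brown, and the present paper (see the manipulations around \eqref{eq3_7}--\eqref{eq3_10}) all retain the telescoping difference $\psi(-(p+1)^\gamma)-\psi(-p^\gamma)$ and write
\[
e\big(h(p+1)^\gamma\big)-e\big(hp^\gamma\big)=2\pi i h\gamma\int_0^1(p+u)^{\gamma-1}e\big(h(p+u)^\gamma\big)\,\mathrm{d}u,
\]
which cancels the $1/h$ from the Fourier expansion and injects a factor $p^{\gamma-1}$ that damps the small-$h$ (nearly non-oscillating) terms. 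For the dominant terms in the exponent-pair bounds, whose $h$-dependence is positive, the two routes are equivalent after summing over $h\leqslant H\asymp x^{1-\gamma}\log^{2}x$; but for the secondary terms with negative $h$-exponent the telescoping version is strictly more forgiving. You should therefore check carefully (rather than assert) that your direct treatment of $\sum_p\psi(-p^\gamma)$ recovers the full range $\gamma>11/12$; as written, the claim that $S_\gamma(h;x)\ll x^\gamma\log^{-B}x$ uniformly for all $h\leqslant H$ is stated without justification, and it is precisely in the regime of very small $h$ that the direct approach is tightest. Adopting the telescoping form, as in the paper's own Section~3, removes this concern at essentially no extra cost.
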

\begin{proof}
See Theorem 2 on pp. 565--566 of Piatetski--Shapiro \cite{Piatetski-Shapiro-1953}.
\end{proof}

\begin{lemma}\label{Lemma2_2}
Assume that $11/12<\gamma<1$ and $x$ is a sufficiently large positive number. Define
\begin{align*}
\mathscr{P}=\big\{p:p\leqslant x,\,p=[n^{1/\gamma}]\big\}, \qquad
f(\alpha)=\sum_{p\in\mathscr{P}}e(p^3\alpha).
\end{align*}
Then, we have
\begin{equation*}
\int_0^1|f(\alpha)|^8\mathrm{d}\alpha\ll|\mathscr{P}|^{5+\varepsilon},
\end{equation*}
where $|\mathscr{P}|$ denotes the cardinality of $\mathscr{P}$.
\end{lemma}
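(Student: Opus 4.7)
By orthogonality, the integral $\int_{0}^{1}|f(\alpha)|^{8}\mathrm{d}\alpha$ equals the number of solutions $(p_{1},\ldots,p_{8})\in\mathscr{P}^{8}$ of $p_{1}^{3}+p_{2}^{3}+p_{3}^{3}+p_{4}^{3}=p_{5}^{3}+p_{6}^{3}+p_{7}^{3}+p_{8}^{3}$, so the lemma is a Hua-type mean-value estimate tailored to Piatetski--Shapiro primes. The target $|\mathscr{P}|^{5+\varepsilon}\asymp x^{5\gamma+\varepsilon}$ is strictly smaller than the bound $\ll x^{5+\varepsilon}$ that we would get from Hua's lemma for the unrestricted prime sum, so the plan must exploit the Piatetski--Shapiro structure of $\mathscr{P}$ rather than merely the containment $\mathscr{P}\subseteq\{\mathrm{primes}\}$.

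The strategy is to split $\mathbb{1}_{\mathscr{P}}(p)$ into a smooth density plus a $\psi$-oscillation via $\mathbb{1}_{\mathscr{P}}(p)=[-p^{\gamma}]-[-(p+1)^{\gamma}]$ combined with $[t]=t-\psi(t)-1/2$. For primes $p$ with $p^{\gamma}\notin\mathbb{Z}$ this yields
\begin{equation*}
\mathbb{1}_{\mathscr{P}}(p)=\bigl((p+1)^{\gamma}-p^{\gamma}\bigr)+\bigl(\psi(-(p+1)^{\gamma})-\psi(-p^{\gamma})\bigr),
\end{equation*}
whence $f=f_{1}+f_{2}$ by the obvious splitting. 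By Minkowski's inequality it suffices to bound $\int_{0}^{1}|f_{j}(\alpha)|^{8}\mathrm{d}\alpha\ll x^{5\gamma+\varepsilon}$ separately for $j=1,2$.

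For the smooth piece $f_{1}$, I would use $(p+1)^{\gamma}-p^{\gamma}=\gamma p^{\gamma-1}(1+O(1/p))$ and a dyadic decomposition $p\sim Y$ with $Y$ a power of $2$ up to $x$. Abel summation converts each dyadic block into $Y^{\gamma-1}$ times a partial cubic prime exponential sum, and Hua's classical mean-value bound $\int_{0}^{1}|\sum_{p\sim Y}e(p^{3}\alpha)|^{8}\mathrm{d}\alpha\ll Y^{5+\varepsilon}$ (combined with a Gallagher-type treatment of the inner maximum) gives
\begin{equation*}
\int_{0}^{1}|f_{1}(\alpha)|^{8}\mathrm{d}\alpha\ll(\log x)^{O(1)}\sum_{Y}Y^{8(\gamma-1)}\cdot Y^{5+\varepsilon}\ll x^{8\gamma-3+\varepsilon},
\end{equation*}
which is $\ll x^{5\gamma+\varepsilon}$ since $8\gamma-3\leqslant 5\gamma$ whenever $\gamma\leqslant 1$. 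For the oscillatory piece $f_{2}$, I would apply Vaaler's truncated Fourier approximation $\psi(t)=\sum_{0<|h|\leqslant H}c_{h}e(ht)+r(t,H)$ with $|c_{h}|\ll 1/|h|$ and Fej\'er-kernel error $r(t,H)$. Expanding $e(-h(p+1)^{\gamma})-e(-hp^{\gamma})\approx-2\pi ih\gamma p^{\gamma-1}e(-hp^{\gamma})$ reduces $f_{2}$ to weighted twisted cubic prime sums $\sum_{p}p^{\gamma-1}e(-hp^{\gamma}+p^{3}\alpha)$; the $\alpha$-independent phase $e(-hp^{\gamma})$ drops out when we take absolute values in the mean value, so the same dyadic-Hua argument applies. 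A tight choice of $H=H(x)$ balances the Fej\'er-kernel remainder against the main Vaaler terms, yielding $\int_{0}^{1}|f_{2}|^{8}\mathrm{d}\alpha\ll x^{8\gamma-3+\varepsilon}$ as well.

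The principal technical obstacle is the oscillatory piece: the $h$-summation from Vaaler's lemma interacts nontrivially with the target exponent $5\gamma$ (rather than the cruder $5$ that a naive Cauchy--Schwarz in $h$ would give), which forces a careful bilinear treatment of the twisted sums and a delicate choice of the truncation parameter $H$. The hypothesis $\gamma>11/12$ enters precisely here via Lemma \ref{Intersection-of-PS-Primes}: it guarantees both the density $|\mathscr{P}|\sim x^{\gamma}/\log x$ and sufficient equidistribution of $\{p^{\gamma}\}$ to close the Vaaler estimates at the required level.
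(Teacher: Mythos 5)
Your approach is genuinely different from the paper's, and it has a real gap. The paper proves Lemma \ref{Lemma2_2} by a purely combinatorial argument that never decomposes $\mathds{1}_{\gamma}$: it writes $|f|^2$ both as $\sum_j b_j e(j\alpha)$ (with $b_j$ counting $p_1^3-p_2^3=j$, $p_i\in\mathscr{P}$) and, after the substitution $p_1=p+k$, as $\sum_j c_j e(j\alpha)$ (with $c_j$ counting $k(3p^2+3pk+k^2)=j$), then uses Parseval and the divisor bound ($c_0\ll|\mathscr{P}|$, $c_j\ll\tau(j)\ll x^\varepsilon\ll|\mathscr{P}|^{\varepsilon}$ for $j\neq0$) to get $\int_0^1|f|^4\ll|\mathscr{P}|^{2+\varepsilon}$; an analogous Cauchy--Schwarz/Weyl step for the eighth power then yields $|\mathscr{P}|^{5+\varepsilon}$. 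The only role of $\gamma>11/12$ is to guarantee $|\mathscr{P}|\asymp x^\gamma/\log x$ via Lemma \ref{Intersection-of-PS-Primes}, so that $x^\varepsilon\ll|\mathscr{P}|^{O(\varepsilon)}$. No Fourier expansion of $\psi$ appears.

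Your route --- splitting $\mathds{1}_{\gamma}(p)$ into a smooth density plus $\psi$-oscillations and treating $f_1$, $f_2$ separately --- cannot close. The smooth piece is fine, but consider $f_2$. Once you replace $e(-h(p+1)^\gamma)-e(-hp^\gamma)$ by its absolute value bound $\ll\min(1,|h|p^{\gamma-1})$ (equivalently, once the ``$\alpha$-independent phase drops out''), you are forced to sum over $h$ with the triangle inequality. With $|c_h|\ll1/|h|$ this gives $\|f_{2,\mathrm{main}}\|_8\ll H\,x^{\gamma-3/8+\varepsilon}$, so you need $H\ll x^{(3-3\gamma)/8}$ to reach $x^{5\gamma/8}$. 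On the other hand the Fej\'er-type remainder contributes $\sum_{p\leqslant x}\min(1,1/(H\|p^\gamma\|))\gg x/H$ to $\|f_{2,\mathrm{err}}\|_\infty$, forcing $H\gg x^{1-5\gamma/8}$. These two constraints require $1-5\gamma/8\leqslant(3-3\gamma)/8$, i.e.\ $\gamma\geqslant 5/2$, which is vacuous. No refinement of the error estimate (e.g.\ interpolating $L^\infty$ against a lower $L^p$ norm of the remainder) changes this: after the phases are dropped, the main Vaaler term is essentially $\sum_p \min(1,\log H)\,e(p^3\alpha)$ and the remainder is essentially a sum over $\gg x/H$ primes, so the method cannot see below the unrestricted Hua bound $x^{5+\varepsilon}$. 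The cancellation you discarded in the $h$-sum is precisely the information that distinguishes $\mathscr{P}$ from the set of all primes; the paper's argument exploits that information combinatorially (the diagonal of the underlying diophantine equation contributes $|\mathscr{P}|$, not $\pi(x)$), rather than through a truncated Fourier series.
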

\begin{proof}
First, it is easy to see that
\begin{align*}
          |f(\alpha)|^2
 = & \,\, \sum_{p_1\in\mathscr{P}}\sum_{p_2\in\mathscr{P}}e((p_1^3-p_2^3)\alpha)
                   \nonumber \\
 = & \,\, \sum_k\sum_{\substack{p\in\mathscr{P} \\ p+k \in \mathscr{P}}}e(((p+k)^3-p^3)\alpha)
                   \nonumber \\
 = & \,\,  \sum_j e(j\alpha)\Bigg(\mathop{\sum_k \sum_{\substack{p\in\mathscr{P} \\ p+k \in \mathscr{P} \\
           k(3p^2+3pk+k^2)=j}}}1 \Bigg) \\
 =: & \,\, \sum_j c_j e(j\alpha),
\end{align*}
where $c_j$ is the number of solutions of the equation
\begin{equation*}
k(3p^2+3pk+k^2)=j
\end{equation*}
with $p,p+k\in \mathscr{P}$ and $\sum_k 1\ll |\mathscr{P}|$. Clearly, for $j=0$, there must hold $k=0$. Otherwise,
if $k\not=0$, by noting the fact that the discriminant of the quadratic polynomial in $p$ is $-3k^2<0$, one has
\begin{equation*}
3p^2+3pk+k^2>0,
\end{equation*}
which contradicts to $j=0$. Hence, we get $c_0\ll|\mathscr{P}|$. For $j \neq 0$, we derive that
\begin{equation*}
c_j\ll\tau(j)\ll\tau\big(\big|k(3p^2+3pk+k^2)\big|\big)\ll|\mathscr{P}|^\varepsilon.
\end{equation*}
On the other hand, one obtains
\begin{align*}
          |f(\alpha)|^2
 =  & \,\, \sum_{p_1\in\mathscr{P}}\sum_{p_2\in\mathscr{P}}e\big((p_1^3-p_2^3)\alpha\big) \\
 =  & \,\, \sum_je(j\alpha)\mathop{\sum_{p_1\in\mathscr{P}}\sum_{p_2\in\mathscr{P}}}_{p_1^3-p_2^3=j} 1 \\
 =: & \sum_j b_j e(j\alpha),
\end{align*}
where $b_j$ is the number of solutions of the equation
\begin{equation*}
p_1^3-p_2^3=j
\end{equation*}
with $p_1,p_2\in \mathscr{P}$. Trivially, one has
\begin{equation*}
\sum_jb_j=|f(0)|^2=|\mathscr{P}|^2,
\end{equation*}
and
\begin{equation*}
b_0=\int_0^1|f(\alpha)|^2\mathrm{d}\alpha=|\mathscr{P}|.
\end{equation*}
Hence, according to Parseval's identity, we derive that
\begin{align}
          \int_0^1|f(\alpha)|^4\mathrm{d}\alpha
 = & \,\, \sum_jb_jc_j=b_0c_0+\sum_{j\neq0}b_jc_j
              \nonumber \\
\ll & \,\, |\mathscr{P}|^2+|\mathscr{P}|^\varepsilon\sum_{j\neq 0}b_j\ll |\mathscr{P}|^{2+\varepsilon}. \label{eq2_1}
\end{align}
Similarly, by Cauchy's inequality, the fourth power of $|f|$ is
\begin{align*}
           |f(\alpha)|^4
\ll & \,\, \bigg|\sum_{k_1}\sum_{\substack{p\in\mathscr{P}\\ p+k_1 \in \mathscr{P}}}
           e\big( k_1 (3p^2+3pk_1+k_1^2)\alpha\big)\bigg|^2
                 \nonumber \\
\ll & \,\, |\mathscr{P}|\cdot\sum_{k_1}\bigg|\sum_{\substack{p\in\mathscr{P}\\ p+k_1\in\mathscr{P}}}
           e\big(k_1 (3p^2+3pk_1+k_1^2)\alpha\big) \bigg|^2
                   \nonumber \\
= & \,\,  |\mathscr{P}| \cdot \sum_{k_1}\sum_{\substack{p_1\in\mathscr{P}\\p_1+k_1 \in \mathscr{P}}}
          \sum_{\substack{p_2\in\mathscr{P} \\p_2+k_1 \in \mathscr{P}}}
          e\big( k_1 (3p_1^2+3p_1k_1-3p_2^2-3p_2k_1)\alpha\big)
                   \nonumber \\
= & \,\, |\mathscr{P}|\cdot\sum_{k_1}\sum_{k_2}
         \sum_{\substack{p,\, p+k_1\in\mathscr{P} \\p+k_2,\, p+k_1+k_2 \in \mathscr{P}}}
          e\big(3k_1k_2 (2p+k_1+k_2)\alpha \big) \\
= & \,\, |\mathscr{P}| \cdot \sum_j e(j\alpha) \Bigg(\sum_{k_1} \sum_{k_2} \sum_{\substack{p,\, p+k_1\in\mathscr{P} \\p+k_2,\, p+k_1+k_2 \in \mathscr{P} \\ 3k_1k_2(2p+k_1+k_2)=j}} 1 \Bigg)\\
=: & \,\, |\mathscr{P}| \cdot\sum_j c^*_j e(j\alpha),
\end{align*}
where $c^*_j$ is the number of solutions of the equation
\begin{equation*}
3k_1k_2(2p+k_1+k_2)=j
\end{equation*}
with $p,p+k_1,p+k_2,p+k_1+k_2\in \mathscr{P}$, $\sum_{k_1} 1\ll |\mathscr{P}|$ and $\sum_{k_2} 1\ll |\mathscr{P}|$. Trivially, for $j=0$, by noting that $p+k_1,p+k_2\in \mathscr{P}$, we get $k_1=0$ or $k_2=0$. Thus,
\begin{equation*}
c_0^*\ll |\mathscr{P}|^2.
\end{equation*}
For $j \neq 0$, we have
\begin{equation*}
c_j^*\ll \tau_4(j)\ll\tau_4\big(|3k_1k_2(2p+k_1+k_2)|\big)\ll|\mathscr{P}|^\varepsilon.
\end{equation*}
Similarly, one also has
\begin{align*}
          |f(\alpha)|^4
 = & \,\, \sum_{p_1\in\mathscr{P}}\sum_{p_2\in\mathscr{P}}\sum_{p_3\in\mathscr{P}}
          \sum_{p_4\in\mathscr{P}} e\big((p_3^3+p_4^3-p_1^3-p_2^3)\alpha\big)
                   \nonumber \\
 = & \,\, \sum_j e(-j\alpha)\mathop{\sum_{p_1\in\mathscr{P}}\sum_{p_2\in\mathscr{P}}\sum_{p_3\in\mathscr{P}}
          \sum_{p_4\in\mathscr{P}}}_{p_1^3+p_2^3-p_3^3-p_4^3=j} 1
                   \nonumber \\
= :& \,\, \sum_j b_j^* e(-j\alpha),
\end{align*}
where $b_j^*$ is the number of solutions of the equation
\begin{equation*}
p_1^3+p_2^3-p_3^3-p_4^3=j
\end{equation*}
with $p_1,p_2,p_3,p_4\in \mathscr{P}$. Clearly,
\begin{equation*}
\sum_j b_j^*=|f(0)|^4=|\mathscr{P}|^4.
\end{equation*}
According to \eqref{eq2_1}, we derive that
\begin{equation*}
b_0^* = \int_0^1 |f(\alpha)|^4 \mathrm{d} \alpha \ll |\mathscr{P}|^{2+\varepsilon}.
\end{equation*}
Thus, according to Parseval's identity, we deduce that
\begin{align*}
\int_0^1 |f(\alpha)|^8 \mathrm{d} \alpha & \ll |\mathscr{P}| \sum_j b_j^* c_j^* = |\mathscr{P}| b_0^* c_0^* +|\mathscr{P}|\sum_{j\neq 0} b_j^* c_j^*\\
& \ll |\mathscr{P}|^{5+\varepsilon} + |\mathscr{P}|^{1+\varepsilon} \sum_{j\neq 0} b_j^* \ll |\mathscr{P}|^{5+\varepsilon}.
\end{align*}
This completes the proof of Lemma \ref{Lemma2_2}.
\end{proof}

\begin{lemma}\label{Hua's-inequality}
Suppose that $1\leqslant j\leqslant k$, and let
\begin{equation*}
g(\alpha) = \sum_{m=1}^Y e(m^k\alpha).
\end{equation*}
Then we have
\begin{equation*}
\int_0^1 |g(\alpha)|^{2^j} \mathrm{d} \alpha \ll Y^{2^j-j+\varepsilon}.
\end{equation*}
\end{lemma}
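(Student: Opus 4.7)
I would prove this by induction on $j$, following Hua's classical argument. The base case $j = 1$ is immediate from orthogonality:
$$\int_0^1 |g(\alpha)|^2\,\mathrm{d}\alpha = \#\bigl\{(m_1, m_2) \in [1,Y]^2 : m_1 = m_2 \bigr\} = Y,$$
which matches $Y^{2^1 - 1}$ with no $\varepsilon$ loss.

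For the inductive step it is convenient to generalise the statement so that $m^k$ may be replaced by any integer polynomial $P(m)$ of degree $d \leqslant k$ with nonzero leading coefficient, since Weyl differencing lowers the degree of the phase. Assuming the generalised bound holds for $j$, I would establish it for $j+1$ (with $j + 1 \leqslant k$) via Weyl's differencing identity
$$|g(\alpha)|^2 = \sum_{|h| < Y}\,\sum_{m \in I_h} e\bigl(\alpha \Delta_h P(m)\bigr),$$
where $\Delta_h P(m) = P(m+h) - P(m)$ has degree $d-1$ in $m$ for $h \neq 0$. Iterating Weyl differencing $j$ times, interspersed with Cauchy--Schwarz, yields a pointwise bound for $|g(\alpha)|^{2^{j+1}}$ in terms of a sum over $\vec{h} = (h_1,\dots,h_{j+1})$ of exponential sums whose phases are $(j+1)$-fold iterated differences $D_{\vec{h}}(m)$, polynomials of degree $k - j - 1$ in $m$ with leading coefficient proportional to $h_1 h_2 \cdots h_{j+1}$.

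Integrating over $\alpha \in [0,1]$ reduces matters to counting integer solutions $m \in I_{\vec{h}}$ of the diophantine equation $D_{\vec{h}}(m) = 0$: the generic case (all $h_i \neq 0$) contributes only $O_k(1)$ roots in $m$, while the degenerate case (some $h_i = 0$) is handled via the inductive hypothesis applied at lower degree. The principal obstacle is achieving the sharp saving $Y^j$ out of the iteration, as a naive bookkeeping of the repeated Cauchy--Schwarz loses a factor of $Y$; the extra saving is recovered by a careful accounting of degenerate configurations combined with the divisor estimate $\tau_s(n) \ll_{s, \varepsilon} n^\varepsilon$, which absorbs all residual losses into the $\varepsilon$ exponent. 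Since this is a classical textbook result, in practice one would simply cite Hua's original paper \cite{Hua-1938} or the corresponding lemma in Vaughan's monograph on the Hardy--Littlewood method.
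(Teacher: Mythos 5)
Your proposal is correct and ends exactly where the paper does: the paper offers no proof beyond a citation to Lemma~2.5 of Vaughan's monograph, which is the same reference you invoke. Your sketch of the underlying Weyl-differencing induction is faithful in spirit (the reduction after integrating over $\alpha$ actually produces a count of solutions of the form $D_{\vec{h}}(x)=D_{\vec{h}'}(y)$ rather than $D_{\vec{h}}(m)=0$, but since you ultimately defer to the textbook this imprecision is immaterial).
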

\begin{proof}
See Lemma 2.5 of Vaughan \cite{Vaughan-book}.
\end{proof}

\begin{lemma}\label{Finite-Fourier-expansion}
For any $H>1$, one has
\begin{equation*}
  \psi(\theta)=-\sum_{0<|h|\leqslant H}\frac{e(\theta h)}{2\pi ih}+O(g(\theta,H)),
\end{equation*}
where
\begin{equation*}
g(\theta,H)=\min\bigg(1,\frac{1}{H\|\theta\|}\bigg)
=\sum_{h=-\infty}^{\infty}a(h)e(\theta h),
\end{equation*}
\begin{equation*}
a(0)\ll \frac{\log 2H}{H},\ \
a(h)\ll \min\bigg(\frac{1}{|h|},\frac{H}{h^2}\bigg)\quad (h\neq 0).
\end{equation*}
\end{lemma}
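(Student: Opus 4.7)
The plan is to establish the two parts of the lemma separately: first, the Vinogradov/Vaaler truncation of the Fourier series of the sawtooth $\psi$, and then the Fourier expansion of the majorant $g(\theta,H)=\min(1,1/(H\|\theta\|))$ together with the claimed estimates for its coefficients $a(h)$.

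For the first part I would invoke the standard construction of trigonometric polynomials $F^{\pm}(\theta)$ of degree at most $H$ satisfying $F^{-}(\theta)\leqslant\psi(\theta)\leqslant F^{+}(\theta)$ for all $\theta$, with $\int_{0}^{1}(F^{+}-F^{-})\mathrm{d}\theta\ll 1/H$ (Vaaler's construction, cf.\ Vaughan's book, Chapter 2). Comparing Fourier coefficients of $F^{\pm}$ and averaging yields
\[
\psi(\theta)=-\sum_{0<|h|\leqslant H}\frac{e(\theta h)}{2\pi ih}+E(\theta,H),
\]
where $|E(\theta,H)|\ll\min(1,1/(H\|\theta\|))=g(\theta,H)$, which is exactly the first assertion.

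For the second part, note that $g(\theta,H)$ is $1$-periodic, even, and of bounded total variation, so it coincides with its Fourier series $g(\theta,H)=\sum_{h}a(h)e(\theta h)$ with $a(-h)=a(h)\in\mathbb{R}$. The restriction of $g$ to $[0,1/2]$ equals $1$ on $(0,1/H]$ and $1/(H\theta)$ on $[1/H,1/2]$, so by evenness
\[
a(0)=2\int_{0}^{1/H}\mathrm{d}\theta+2\int_{1/H}^{1/2}\frac{\mathrm{d}\theta}{H\theta}=\frac{2}{H}\bigl(1+\log(H/2)\bigr)\ll\frac{\log 2H}{H},
\]
giving the stated bound on $a(0)$. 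For $h\neq 0$, I would use $a(h)=2\int_{0}^{1/2}g(\theta,H)\cos(2\pi h\theta)\mathrm{d}\theta$. The estimate $a(h)\ll 1/|h|$ is immediate from one integration by parts together with the total variation bound $V(g)\ll 1$. For the sharper estimate $a(h)\ll H/h^{2}$, I would integrate by parts once; since $g'\equiv 0$ on $(0,1/H)$ and $g'(\theta)=-1/(H\theta^{2})$ on $(1/H,1/2)$, and the boundary terms vanish because $\sin(\pi h)=0$, this gives
\[
a(h)=\frac{1}{\pi hH}\int_{1/H}^{1/2}\frac{\sin(2\pi h\theta)}{\theta^{2}}\mathrm{d}\theta.
\]
A second integration by parts on the remaining integral produces a boundary contribution dominated by $\cos(2\pi h/H)/(2\pi h\cdot H^{-2})\ll H^{2}/|h|$ at the endpoint $\theta=1/H$, together with a residual integral bounded trivially by $|h|^{-1}\int_{1/H}^{1/2}\mathrm{d}\theta/\theta^{3}\ll H^{2}/|h|$. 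Combining with the prefactor $1/(\pi hH)$ yields $a(h)\ll H/h^{2}$, completing the proof.

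The main technical point is not deep; it is just the careful bookkeeping of the integration by parts near the corner $\theta=1/H$ where $g$ has a jump in its derivative. No new ideas beyond standard Fourier analysis and the cited Vaaler construction are required.
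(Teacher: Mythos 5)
Your computation of the Fourier coefficients $a(h)$ in the second half is correct and matches what one finds on the cited page of Heath--Brown: the value of $a(0)$ is computed directly by integrating the explicit piecewise formula for $g$, the bound $a(h)\ll 1/|h|$ follows from one integration by parts and the $O(1)$ total variation of $g$, and the bound $a(h)\ll H/h^{2}$ follows from a second integration by parts whose dominant contribution comes from the endpoint $\theta=1/H$. That part is fine.

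The first half has a genuine gap. The lemma asserts a pointwise bound on the error when the \emph{exact} truncated Fourier series $-\sum_{0<|h|\leqslant H}e(\theta h)/(2\pi ih)$ is subtracted from $\psi(\theta)$. The Vaaler construction you invoke produces polynomials $F^{\pm}$ whose Fourier coefficients are \emph{not} $\widehat{\psi}(h)$ truncated at $|h|\leqslant H$; they are damped by a Fej\'er-type weight $(1-|h|/(H+1))$ with further explicit corrections, precisely so that $F^{-}\leqslant\psi\leqslant F^{+}$ holds. So ``comparing Fourier coefficients and averaging'' does not produce the trigonometric polynomial appearing in the lemma, and writing $|\psi-A|\leqslant|\psi-\tfrac12(F^{+}+F^{-})|+|\tfrac12(F^{+}+F^{-})-A|$ leaves you with a degree-$H$ polynomial whose coefficients are $O(1/H)$; bounding that pointwise by $\min(1,1/(H\|\theta\|))$ requires precisely the partial-summation estimate that also proves the lemma directly, so the detour through Vaaler buys nothing and the step as written does not close. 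The direct route, which is what Heath--Brown does, is to write $\psi(\theta)=-\sum_{h\geqslant 1}\sin(2\pi h\theta)/(\pi h)$ for $\theta\notin\mathbb{Z}$ and bound the tail $\sum_{h>H}\sin(2\pi h\theta)/(\pi h)$: the trivial $O(1)$ bound holds because both $\psi$ and the partial sums of its Fourier series are uniformly bounded, and the bound $O(1/(H\|\theta\|))$ follows from Abel summation using $\bigl|\sum_{h\leqslant K}\sin(2\pi h\theta)\bigr|\ll 1/\|\theta\|$. I'd replace your first paragraph with this elementary argument; also, the reference to the Vaaler construction ``cf.\ Vaughan's book, Chapter 2'' is misplaced, since Chapter 2 of Vaughan contains Hua's lemma and the circle method, not the Beurling--Selberg/Vaaler extremal polynomials.
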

\begin{proof}
See the arguments on page 245 of Heath--Brown \cite{Heath-Brown-1983}.
\end{proof}

\begin{lemma}\label{vander-Corput-method}
Suppose that $5<A<B\leqslant 2A$, $f''(x)$ is continuous on $[A, B]$. If there exist two positive constants $c_1$ and $c_2$ such that $0<c_1 \lambda_2 \leqslant|f^{\prime \prime}(x)| \leqslant c_2 \lambda_2$, then
\begin{equation*}
\sum_{A<n \leqslant B}e(f(n))\ll A\lambda_2^{1/2}+\lambda_2^{-1/2}.
\end{equation*}
\end{lemma}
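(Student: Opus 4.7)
This is the classical second derivative test for exponential sums, originally due to van der Corput, and I would proceed via a proof that couples truncated Poisson summation with the principle of stationary phase. I would begin by noting that we may assume without loss of generality that $f''(x)\geqslant c_1\lambda_2>0$ on $[A,B]$ (the opposite sign is handled by conjugation). Then $f'$ is strictly increasing; let $\alpha=f'(A)$ and $\beta=f'(B)$, and observe that by the mean value theorem
\begin{equation*}
\beta-\alpha\leqslant c_2\lambda_2(B-A)\ll A\lambda_2.
\end{equation*}

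The first main step is to apply van der Corput's transformation (truncated Poisson summation) to write
\begin{equation*}
\sum_{A<n\leqslant B}e(f(n))=\sum_{\alpha-1/2<\nu\leqslant \beta+1/2}\int_A^B e(f(x)-\nu x)\,\mathrm{d}x+O\bigl(\log(\beta-\alpha+2)\bigr),
\end{equation*}
where the outer sum is over integers $\nu$. This can be derived from Lemma \ref{Finite-Fourier-expansion} applied to $\psi$ together with the identity $\sum_{A<n\leqslant B}g(n)=\int_A^B g(x)\,\mathrm{d}x-\int_A^B g'(x)\psi(x)\,\mathrm{d}x+\text{boundary terms}$ and a standard smoothing; alternatively, it appears in Titchmarsh or in Graham--Kolesnik as a black box.

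The second main step is to estimate each integral by stationary phase. For each integer $\nu$ in the summation range, the phase $\phi_\nu(x)=f(x)-\nu x$ has derivative $f'(x)-\nu$, whose unique zero $x_\nu$ (when it exists in $[A,B]$) satisfies $\phi_\nu''(x_\nu)=f''(x_\nu)\asymp\lambda_2$. The standard stationary phase bound for a real phase with second derivative comparable to $\lambda_2$ then yields
\begin{equation*}
\int_A^B e(f(x)-\nu x)\,\mathrm{d}x\ll\lambda_2^{-1/2},
\end{equation*}
uniformly in $\nu$; the $\nu$ for which no stationary point lies in $[A,B]$ contribute only boundary terms of the same order or smaller. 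Summing over $\nu$ (of which there are $\ll\beta-\alpha+1\ll A\lambda_2+1$ many) gives the total bound $(A\lambda_2+1)\lambda_2^{-1/2}=A\lambda_2^{1/2}+\lambda_2^{-1/2}$, as required.

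The main obstacle, such as there is one, is justifying the stationary phase estimate with only a $C^2$ assumption on $f$: one cannot use integration by parts twice in the usual way, and instead must split $[A,B]$ into the region near $x_\nu$ (where one uses the trivial bound on an interval of length $\asymp\lambda_2^{-1/2}$) and the region away from $x_\nu$ (where $|\phi_\nu'(x)|\gg\lambda_2^{1/2}$ allows a single integration by parts). Since this argument is entirely classical and the paper cites Heath-Brown \cite{Heath-Brown-1983} for precisely the same kind of estimate on $\psi$, I expect the author likewise simply cites the result — e.g., Titchmarsh, \emph{The Theory of the Riemann Zeta-Function}, Theorem 5.9, or Graham--Kolesnik, \emph{Van der Corput's Method of Exponential Sums}, Theorem 2.2 — rather than reproducing the derivation.
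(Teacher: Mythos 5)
The paper's ``proof'' of this lemma is the single line ``See Theorem 2.2 of Graham and Kolesnik,'' which you correctly anticipated in your final paragraph; so at the level of what is actually written, your proposal matches the paper. But the proof sketch you supply differs from the argument Graham and Kolesnik actually give for their Theorem 2.2. You prove the second-derivative test by the van der Corput $B$-process (truncated Poisson summation followed by stationary phase on each integral), which is a heavier piece of machinery than is needed: one must first establish the truncated Poisson formula with its $O(\log(\beta-\alpha+2))$ error, then the stationary-phase estimate $\int e(f(x)-\nu x)\,\mathrm{d}x\ll\lambda_2^{-1/2}$ under only a $C^2$ hypothesis, and finally check that the logarithmic error term is absorbed by $A\lambda_2^{1/2}+\lambda_2^{-1/2}$ (it is, since when $A\lambda_2\geqslant 1$ one has $\log(A\lambda_2+2)\ll (A\lambda_2)^{1/2}$, and when $A\lambda_2<1$ one has $\lambda_2^{-1/2}>A^{1/2}\gg 1$). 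The standard proof, and the one in Graham--Kolesnik, is more elementary and avoids Poisson summation entirely: since $f''$ has constant sign, $f'$ is monotone, so one splits $[A,B]$ into the $O(A\lambda_2+1)$ subintervals on which $\|f'(x)\|<\delta$ (bounded trivially, each of length $\ll\delta/\lambda_2$) and the complementary subintervals on which $\|f'(x)\|\geqslant\delta$ (bounded by the Kusmin--Landau first-derivative test, $\ll\delta^{-1}$ each), then optimises by taking $\delta=\lambda_2^{1/2}$. Your route is correct and buys a template that generalises to the $B$-process proper (exponent-pair theory), but for this isolated lemma the Kusmin--Landau route is shorter and requires less regularity bookkeeping.
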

\begin{proof}
See Theorem 2.2 of Graham and Kolesnik \cite{Graham-Kolesnik-book}.
\end{proof}

\begin{lemma}\label{Van-der-Corput-k-derivatives}
Let $q$ be a positive integer. Suppose that $f$ is a real valued function with $q+2$ continuous derivatives on some interval $I$. Suppose also that for some $\lambda>0$ and for some $\alpha>1$, there holds
\begin{equation*}
\lambda\leqslant \big|f^{(q+2)}(x)\big|\leqslant \alpha \lambda
\end{equation*}
on I. Let $Q=2^q$. Then one has
\begin{equation*}
\sum_{n \in I} e(f(n)) \ll |I|(\alpha^2\lambda)^{1/(4Q-2)}+|I|^{1-1/(2Q)}\alpha^{1/(2Q)}+|I|^{1-2/Q+1/Q^2}\lambda^{-1/(2Q)}.
\end{equation*}
where the implied constant is absolute.
\end{lemma}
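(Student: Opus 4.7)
The plan is to derive this bound by iterating the van der Corput A-process (Weyl differencing) $q$ times and then invoking the second derivative test, Lemma \ref{vander-Corput-method}, on the resulting inner sum. This is the standard mechanism by which higher exponent pairs are produced from the trivial pair $(0,1)$ via $q$ applications of the $A$-operator, and the shape of the bound in the statement is precisely what emerges after $q$ A-processes followed by one second-derivative estimate.

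First I would record the A-process in the familiar shape: for any integer $H$ with $1\leqslant H\leqslant |I|$,
\begin{equation*}
\bigg|\sum_{n\in I}e(f(n))\bigg|^{2}\ll \frac{|I|^{2}}{H}+\frac{|I|}{H}\sum_{1\leqslant h\leqslant H}\bigg|\sum_{n\in I_{h}}e(f(n+h)-f(n))\bigg|,
\end{equation*}
and note that the $(q+1)$-th derivative of the new phase $f(n+h)-f(n)$ is of size $\asymp h\,|f^{(q+2)}(\xi)|$ by the mean value theorem. Iterating this inequality $q$ times with shift parameters $H_{1},\dots,H_{q}$ (and raising to the $Q$-th power, $Q=2^{q}$, to collect the successive squarings) reduces the problem to estimating an inner sum whose second derivative is squeezed between constant multiples of $h_{1}h_{2}\cdots h_{q}\lambda$ and $h_{1}h_{2}\cdots h_{q}\alpha\lambda$.

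Then I would apply Lemma \ref{vander-Corput-method} to that inner sum, obtaining a contribution of the shape $|I|(h_{1}\cdots h_{q}\alpha\lambda)^{1/2}+(h_{1}\cdots h_{q}\lambda)^{-1/2}$, sum over the $h_{j}\leqslant H_{j}$, and unwind the nested squarings. The three summands in the conclusion then arise transparently after optimizing the $H_{j}$: the first, with exponent $1/(4Q-2)$, comes from balancing the leading $|I|(h_{1}\cdots h_{q}\alpha\lambda)^{1/2}$ contribution against the dominant $|I|^{2}/H$ losses from the Weyl step; the second, $|I|^{1-1/(2Q)}\alpha^{1/(2Q)}$, is the residual Weyl-step loss after this optimization; and the third, $|I|^{1-2/Q+1/Q^{2}}\lambda^{-1/(2Q)}$, unwinds from the reciprocal term $(h_{1}\cdots h_{q}\lambda)^{-1/2}$ of the second-derivative test.

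The main obstacle is the bookkeeping: tracking through $q$ nested A-processes how the ratio $\alpha$, the derivative size $\lambda$, the interval length $|I|$, and the shift parameters interact, and verifying that all auxiliary error terms (from the initial $|I|^{2}/H$ loss and from the shrinking of the interval to $I_{h}$) can be absorbed into one of the three explicit summands. Once the combinatorics of the squarings is organized, the optimization of the $H_{j}$ is routine. For a streamlined write-up one may simply cite the corresponding higher-derivative theorem in Graham and Kolesnik \cite{Graham-Kolesnik-book}, where exactly this chain of estimates is carried out in detail.
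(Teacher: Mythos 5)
Your proposal is correct and ultimately coincides with the paper's approach: the paper's proof consists solely of the citation ``See Theorem 2.8 of Graham and Kolesnik \cite{Graham-Kolesnik-book},'' and your closing remark points to exactly this. Your sketch of the underlying mechanism (iterating the van der Corput $A$-process $q$ times to reduce to a second-derivative bound, then applying Lemma \ref{vander-Corput-method} and optimizing the shift parameters $H_1,\dots,H_q$) is an accurate description of how that theorem is proved in Graham--Kolesnik, so there is no substantive divergence.
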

\begin{proof}
See Theorem 2.8 of Graham and Kolesnik \cite{Graham-Kolesnik-book}.
\end{proof}

\begin{lemma}\label{Heath-Brown-1983}
For $1/2<\alpha<1$, $H\geqslant 1$, $K\geqslant 1$, $\Delta>0$, let $\mathscr{N}(\Delta)$ denote the number of solutions of the following inequality
\begin{equation*}
|h_1k_1^\alpha - h_2k_2^\alpha|\leqslant \Delta, \quad h_1,h_2\sim H, \quad  k_1,k_2\sim K.
\end{equation*}
Then we have
\begin{equation*}
\mathscr{N}(\Delta)\ll\Delta HK^{2-\alpha}+HK\log(HK).
\end{equation*}
\end{lemma}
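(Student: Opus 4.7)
The plan is to fix the triple $(h_1,k_1,h_2)$ and count the number of admissible $k_2\sim K$. Since the map $k\mapsto h_2 k^\alpha$ is strictly monotonic on $(K,2K]$ with derivative of magnitude $\asymp h_2 K^{\alpha-1}$, the mean value theorem shows that the real $k_2$ satisfying $|h_1 k_1^\alpha - h_2 k_2^\alpha|\leqslant \Delta$ fill an interval of length $\ll \Delta K^{1-\alpha}/h_2$. Hence the number of integer $k_2\sim K$ in this interval is $\ll 1+\Delta K^{1-\alpha}/h_2$. Summing the second term over $h_2\sim H$ (using the dyadic estimate $\sum_{h_2\sim H}h_2^{-1}\ll 1$) and then over $(h_1,k_1)\in(H,2H]\times(K,2K]$ already produces the main contribution $\ll \Delta HK^{2-\alpha}$.

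For the leftover ``$1$''-contribution, I would split the range of $h_2$ into two regimes. In the regime $h_2\leqslant C\Delta K^{1-\alpha}$ (which is nonempty only when $\Delta K^{1-\alpha}\gtrsim H$), one has $\Delta K^{1-\alpha}/h_2\geqslant 1$, so the ``$1$'' is absorbed into the main term. In the complementary regime $h_2>C\Delta K^{1-\alpha}$, the admissible interval for $k_2$ has length less than one, so at most one integer $k_2$ can satisfy the inequality; moreover, such a $k_2$ exists if and only if the nearest integer to $k_2^{\ast}:=(h_1 k_1^\alpha/h_2)^{1/\alpha}$ lies within the interval, which amounts to the fractional-part condition $\|k_2^{\ast}\|\ll \Delta K^{1-\alpha}/h_2$.

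The heart of the argument is to count the triples $(h_1,k_1,h_2)$ in this second regime. To do so, I plan to apply the truncated Fourier expansion of $\psi$ from Lemma \ref{Finite-Fourier-expansion} to convert $\mathbf{1}[\|k_2^{\ast}\|<\delta]$ into a sum of exponentials, and then invoke the van der Corput second-derivative estimate (Lemma \ref{vander-Corput-method}) to bound $\sum_{h_2\sim H}e(m\, k_2^{\ast}(h_2))$ for each nonzero Fourier mode $m$, using the fact that $|d^2 k_2^{\ast}/dh_2^2|\asymp (h_1 k_1^\alpha)^{1/\alpha}H^{-1/\alpha-2}\asymp K/H^2$ uniformly in the relevant ranges. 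Summing carefully over the Fourier modes $m$ with the weights $\min(1/|m|, H/m^2)$ from Lemma \ref{Finite-Fourier-expansion} and then averaging over $(h_1,k_1)$ in $(H,2H]\times(K,2K]$ will yield the desired bound of $HK\log(HK)$.

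The principal obstacle is the careful execution of this exponential-sum estimation: the naive pointwise count gives only $H^2K$, whereas the lemma requires the sharper $HK\log(HK)$, a saving of a factor $H/\log(HK)$. The logarithm arises from summing $1/m$ over the Fourier modes, and its appearance is tied to the smoothness and convexity of $h_2\mapsto k_2^{\ast}(h_2)$ guaranteed by the hypothesis $1/2<\alpha<1$. A secondary technical point is absorbing the boundary effects from the membership $k_2\in(K,2K]$ into the same bound. Combining the contributions from the two regimes then yields $\mathscr{N}(\Delta)\ll \Delta HK^{2-\alpha}+HK\log(HK)$, as claimed.
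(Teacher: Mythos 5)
Your decomposition into the ``length-of-interval'' contribution, yielding $\Delta HK^{2-\alpha}$, and the ``at most one $k_2$'' regime with the condition rewritten as $\|k_2^*\|\ll\delta(h_2)\asymp\delta_0:=\Delta K^{1-\alpha}/H$, is a reasonable start. The gap is in the treatment of the second regime. You propose, for each fixed pair $(h_1,k_1)$, to expand $\mathds{1}[\|k_2^*\|<\delta_0]$ via Lemma~\ref{Finite-Fourier-expansion}, to estimate $\sum_{h_2\sim H}e(mk_2^*(h_2))$ by the second-derivative test of Lemma~\ref{vander-Corput-method}, and then to sum the resulting bound trivially over $(h_1,k_1)$. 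This cannot reach $HK\log(HK)$. Indeed, since $|d^2(mk_2^*)/dh_2^2|\asymp mK/H^2$, Lemma~\ref{vander-Corput-method} gives $\sum_{h_2\sim H}e(mk_2^*(h_2))\ll(mK)^{1/2}+H(mK)^{-1/2}$, and already the modes $m\leqslant 1/\delta_0$ (with Fourier weight $\asymp\delta_0$) contribute $\gg\delta_0 K^{1/2}\sum_{m\leqslant 1/\delta_0}m^{1/2}\asymp K^{1/2}\delta_0^{-1/2}=(HK^\alpha/\Delta)^{1/2}$ for a single $(h_1,k_1)$. Multiplying by the $\asymp HK$ admissible pairs $(h_1,k_1)$ gives $\asymp H^{3/2}K^{1+\alpha/2}\Delta^{-1/2}$, and at the crossover $\Delta\asymp K^{\alpha-1}\log(HK)$, where $\Delta HK^{2-\alpha}\asymp HK\log(HK)$, this is $(HK)^{3/2}(\log HK)^{-1/2}$, which dwarfs $HK\log(HK)$ unless $HK\ll(\log HK)^{3}$.

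More structurally, the number of triples $(h_1,k_1,h_2)$ with $\|k_2^*\|<\delta_0$ has to come out $\ll\Delta HK^{2-\alpha}+HK\log(HK)$, a saving of roughly $H/\log(HK)$ over the trivial $H^2K$. Any scheme that bounds the inner $h_2$-sum for each fixed $(h_1,k_1)$ and then sums trivially over $(h_1,k_1)$ would have to show the inner count is $\ll\log(HK)$ uniformly in $(h_1,k_1)$, which is false: when $(h_1k_1^\alpha)^{1/\alpha}$ happens to be well approximated by a rational with small denominator, a positive proportion of $h_2\sim H$ can satisfy $\|k_2^*(h_2)\|<\delta_0$. What actually makes the lemma true is that such exceptional $(h_1,k_1)$ are rare, and this Diophantine rarity is invisible to a pointwise-in-$(h_1,k_1)$ application of van der Corput followed by a trivial outer sum. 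Heath--Brown's proof on pp.~256--257 of \cite{Heath-Brown-1983}, which is what this paper cites, is elementary and exploits exactly this averaging over $(h_1,k_1)$ through a spacing/counting argument with no exponential-sum machinery. You would need either to reconstruct that argument, or to redesign this step so that the count genuinely exploits the $(h_1,k_1)$-average rather than treating those variables trivially.
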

\begin{proof}
See the arguments on pp. 256--257 of Heath--Brown \cite{Heath-Brown-1983}.
\end{proof}

\begin{lemma}\label{Optimization-Principle}
Suppose that
\begin{equation*}
L(q) = \sum_{i=1}^{m} A_i q^{u_i} + \sum_{j=1}^{n} B_j q^{-v_j},
\end{equation*}
where $m,n,A_i,B_j,u_i$ and $v_j$ are positive. Assume further that $0\leqslant Q_1\leqslant Q_2$. Then there exists some $q$ subject to $Q_1\leqslant q\leqslant Q_2$ such that
\begin{equation*}
L(q)\ll\sum_{i=1}^{m}\sum_{j=1}^{n}(A_i^{v_j}B_j^{u_i})^{1/(u_i+v_j)}+\sum_{i=1}^{m}A_iQ_1^{u_i}
+\sum_{i=1}^{n} B_j Q_2^{-v_j}.
\end{equation*}
The implied constants depend only on $m$ and $n$.
\end{lemma}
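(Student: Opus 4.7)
The plan is to exploit the natural pairing between the increasing terms $A_iq^{u_i}$ and the decreasing terms $B_jq^{-v_j}$. For each pair $(i,j)$ the equation $A_iq^{u_i}=B_jq^{-v_j}$ has the unique positive solution
$$q_{ij}:=(B_j/A_i)^{1/(u_i+v_j)},$$
at which the common value of the two sides equals $(A_i^{v_j}B_j^{u_i})^{1/(u_i+v_j)}$, which is precisely the $(i,j)$-summand appearing in the first sum on the right-hand side of the desired bound. This strongly suggests choosing $q$ either at some balance point $q_{ij}$ (if one of them falls inside $[Q_1,Q_2]$) or, failing that, at one of the two endpoints $Q_1$, $Q_2$.

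To organize the argument I would pass to maxima. Set
$$F(q):=\max_{1\leqslant i\leqslant m}A_iq^{u_i},\qquad G(q):=\max_{1\leqslant j\leqslant n}B_jq^{-v_j},$$
so that $F$ is non-decreasing, $G$ is non-increasing on $q>0$, and
$$L(q)\;\leqslant\;mF(q)+nG(q)\;\leqslant\;(m+n)\max\bigl(F(q),G(q)\bigr).$$
It therefore suffices to locate $q\in[Q_1,Q_2]$ where $\max(F,G)$ is suitably controlled. Since $F-G$ is monotone non-decreasing, its sign partitions $[Q_1,Q_2]$ into at most two pieces, giving a clean three-case split.

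In Case (a), $F(Q_1)\geqslant G(Q_1)$: then $\max(F,G)=F$ throughout $[Q_1,Q_2]$, so $F$ is minimized at $q=Q_1$ and one obtains $L(Q_1)\ll F(Q_1)\leqslant\sum_{i}A_iQ_1^{u_i}$, absorbed into the second sum on the right-hand side. In Case (b), $F(Q_2)\leqslant G(Q_2)$: symmetrically, $L(Q_2)\ll G(Q_2)\leqslant\sum_{j}B_jQ_2^{-v_j}$. In Case (c), $F(Q_1)<G(Q_1)$ and $F(Q_2)>G(Q_2)$: by continuity there exists $q^{\ast}\in(Q_1,Q_2)$ with $F(q^{\ast})=G(q^{\ast})$. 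Picking indices $(i_0,j_0)$ that realize the two maxima at $q^{\ast}$, the identity $A_{i_0}(q^{\ast})^{u_{i_0}}=B_{j_0}(q^{\ast})^{-v_{j_0}}$ forces $q^{\ast}=q_{i_0j_0}$, and hence
$$L(q^{\ast})\;\ll\;\bigl(A_{i_0}^{v_{j_0}}B_{j_0}^{u_{i_0}}\bigr)^{1/(u_{i_0}+v_{j_0})},$$
which is one of the summands of the double sum on the right-hand side.

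There is no real analytic obstacle here: the argument is an elementary minimax/balancing, and all implicit constants depend only on $m+n$, as stated. The only point requiring minor care is verifying that the indices $(i_0,j_0)$ attaining the two maxima at the crossover do genuinely satisfy the balancing relation $A_{i_0}(q^{\ast})^{u_{i_0}}=B_{j_0}(q^{\ast})^{-v_{j_0}}$, so that the common value matches exactly $(A_{i_0}^{v_{j_0}}B_{j_0}^{u_{i_0}})^{1/(u_{i_0}+v_{j_0})}$; this is immediate from the definitions of $F$, $G$ together with $F(q^{\ast})=G(q^{\ast})$.
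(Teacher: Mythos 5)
Your argument is correct. The paper does not actually prove this lemma — it simply cites Lemma~3 of Srinivasan (1963) — so there is no in-paper proof to compare against; what you supply is a complete, self-contained justification. The strategy (pass from $L$ to $\max(F,G)$ with $F$ non-decreasing and $G$ non-increasing, split according to the sign of $F-G$ at the endpoints, and at a crossover point identify the active indices $i_0,j_0$ to recover the balanced value $(A_{i_0}^{v_{j_0}}B_{j_0}^{u_{i_0}})^{1/(u_{i_0}+v_{j_0})}$) is the standard balancing/minimax argument and every step checks out: in particular, the algebra $A_{i_0}(q^{\ast})^{u_{i_0}}=B_{j_0}(q^{\ast})^{-v_{j_0}}$ does force $q^{\ast}=q_{i_0j_0}$, and the common value equals the claimed exponent mean, so the implied constant $m+n$ is explicit. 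One cosmetic remark: when $Q_1=0$ the comparison $F(Q_1)\geqslant G(Q_1)$ in Case~(a) reads $0\geqslant+\infty$ and so never triggers, which means the argument silently routes the degenerate endpoint into Case~(b) or~(c); it would be worth a sentence to say so, and likewise to remark that $Q_2=\infty$ forces Case~(a) or~(c). These are harmless, but stating them makes the three-way split manifestly exhaustive for all $0\leqslant Q_1\leqslant Q_2\leqslant\infty$.
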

\begin{proof}
See Lemma 3 of Srinivasan \cite{Srinivasan-1963}.
\end{proof}

\begin{lemma}\label{Heath-Brown-identity}
  Let $z\geqslant1$ and $k\geqslant1$. Then, for any $n\leqslant2z^k$, there holds
\begin{equation*}
\Lambda(n)=\sum_{j=1}^k(-1)^{j-1}\binom{k}{j}\mathop{\sum\cdots\sum}_{\substack{n_1n_2\cdots n_{2j}=n\\
n_{j+1},\dots,n_{2j}\leqslant z }}(\log n_1)\mu(n_{j+1})\cdots\mu(n_{2j}).
\end{equation*}
\end{lemma}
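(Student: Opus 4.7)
The plan is to prove Heath--Brown's identity through Dirichlet series algebra built on $\zeta(s)$ and the Möbius generating function. Write
\begin{equation*}
M(s)=\sum_{n\geqslant 1}\frac{\mu(n)}{n^s},\qquad M_z(s)=\sum_{n\leqslant z}\frac{\mu(n)}{n^s},
\end{equation*}
so that $\zeta(s)M(s)=1$. The idea is to compare the Dirichlet coefficients of $-\zeta'(s)/\zeta(s)$ against those obtained from the binomial expansion of $1-(1-\zeta(s)M_z(s))^k$, showing that the discrepancy contributes only at $n>2z^k$.

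First I would verify that the Dirichlet coefficients of $1-\zeta(s)M_z(s)$ vanish for $n\leqslant z$: the coefficient at $n$ equals $-\sum_{d\mid n,\,d\leqslant z}\mu(d)$, which for $n\leqslant z$ reduces to $-\sum_{d\mid n}\mu(d)=0$. Using the elementary fact that a Dirichlet product $A(s)B(s)$ has coefficients vanishing up to $z_1z_2$ whenever those of $A$ and $B$ vanish up to $z_1$ and $z_2$ respectively, it follows inductively that $(1-\zeta(s)M_z(s))^k$ has coefficients vanishing for $n\leqslant z^k$. Combined with the telescoping identity
\begin{equation*}
M(s)\bigl(1-\zeta(s)M_z(s)\bigr)=M(s)-M_z(s),
\end{equation*}
whose right side is supported on $n>z$, this yields that the Dirichlet coefficients of $M(s)(1-\zeta(s)M_z(s))^k$ still vanish for $n\leqslant z^k$.

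Next I would multiply by $-\zeta'(s)=\sum_n(\log n)n^{-s}$ and exploit $\log 1=0$. Any nonzero contribution to the coefficient of $n^{-s}$ in $-\zeta'(s)M(s)(1-\zeta(s)M_z(s))^k$ requires a factorization $n=ab$ with $a\geqslant 2$ and $b>z^k$, forcing $n>2z^k$. Therefore, for every $n\leqslant 2z^k$, the coefficients of $n^{-s}$ in $-\zeta'(s)/\zeta(s)=-\zeta'(s)M(s)$ and in
\begin{equation*}
-\zeta'(s)M(s)\bigl[1-(1-\zeta(s)M_z(s))^k\bigr]
\end{equation*}
agree, and both equal $\Lambda(n)$.

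Finally I would expand by the binomial theorem,
\begin{equation*}
1-(1-\zeta(s)M_z(s))^k=\sum_{j=1}^k(-1)^{j-1}\binom{k}{j}\zeta(s)^jM_z(s)^j,
\end{equation*}
use $M(s)\zeta(s)=1$ to eliminate one $\zeta$ from each term, and read off the coefficient of $n^{-s}$ in the resulting sum $\sum_{j=1}^k(-1)^{j-1}\binom{k}{j}(-\zeta'(s))\zeta(s)^{j-1}M_z(s)^j$. That coefficient is precisely the multiple convolution $\sum(\log n_1)\mu(n_{j+1})\cdots\mu(n_{2j})$ under the constraints $n_1\cdots n_{2j}=n$ and $n_{j+1},\dots,n_{2j}\leqslant z$, completing the argument. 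The main technical point is tracking how the vanishing ranges propagate through successive Dirichlet products; in particular, the vanishing of the $n=1$ coefficient of $\zeta'(s)$ is what sharpens the admissible range from $z^k$ to the stated $2z^k$.
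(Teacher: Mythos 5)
The paper does not reproduce a proof of this lemma; it simply refers the reader to Heath--Brown's 1982 paper. Your Dirichlet--series argument---compare $-\zeta'/\zeta$ with $-\zeta'M\bigl[1-(1-\zeta M_z)^k\bigr]$, show that the error term $-\zeta'M(1-\zeta M_z)^k$ has vanishing coefficients for $n\leqslant 2z^k$ by tracking how the ``support starts above'' property propagates under Dirichlet convolution, then expand by the binomial theorem and cancel one $\zeta$ against $M$---is correct, and it is essentially the same proof that appears in the cited source.
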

\begin{proof}
See the arguments on pp. 1366--1367 of Heath--Brown \cite{Heath-Brown-1982}.
\end{proof}

\section{ Translation of the problem}
Let $11/12<\gamma_i<1\,(i=1,2,\dots,9)$ be real numbers. Define
\begin{equation*}
G(\alpha)=\sum_{p\leqslant N^{1/3}}(\log p)e(p^3\alpha),
\end{equation*}
and
\begin{equation*}
F_i(\alpha)=\frac{1}{\gamma_i} \sum_{\substack{p\leqslant N^{1/3} \\ p=[n^{1/\gamma_i}]}}
p^{1-\gamma_i}(\log p)e(p^3\alpha), \qquad (i=1,2,\dots,9).
\end{equation*}
By the orthogonality of exponential function, we get
\begin{equation*}
\mathscr{R}(N):=\sum_{N=p_1^3+p_2^3+\dots+ p_9^3}\prod_{j=1}^9\log p_j=\int_0^1 G^9(\alpha)
e(-N\alpha)\mathrm{d}\alpha,
\end{equation*}
and the sum on the left-hand side of \eqref{eq1_01} can be written as
\begin{equation*}
\mathscr{T}(N)=\int_0^1 F_1(\alpha) F_2(\alpha) \cdots F_9(\alpha) e(-N \alpha) \mathrm{d} \alpha .
\end{equation*}
It is easy to see that
\begin{align}\label{eq3_1}
          F_1F_2\cdots F_9-G^9
 = & \,\, (F_1-G)F_2\cdots F_9+G(F_2-G)F_3\cdots F_9+G^2(F_3-G)F_4\cdots F_9
                  \nonumber \\
   & \,\, + G^3(F_4-G)F_5\cdots F_9+G^4(F_5-G)F_6\cdots F_9+G^5(F_6-G)F_7F_8F_9
                  \nonumber \\
   & \,\, + G^6(F_7-G)F_8F_9+G^7(F_8-G)F_9+G^8(F_9-G).
\end{align}
Thus, by \eqref{eq3_1}, we have $\mathscr{T}(N)=\mathscr{R}(N)+\mathscr{E}$, where
\begin{align}\label{eq3_2}
            \mathscr{E}
 \ll & \,\, \sup_{\alpha\in[0,1]}|F_1-G|\times\int_0^1|F_2\cdots F_9|\mathrm{d}\alpha
            +\sup_{\alpha\in[0,1]}|F_2-G|\times\int_0^1|GF_3\cdots F_9|\mathrm{d}\alpha
                   \nonumber \\
     & +\sup_{\alpha\in[0,1]}|F_3-G|\times\int_0^1|G^2 F_4\cdots F_9|\mathrm{d}\alpha
       +\sup_{\alpha\in(0,1)}|F_4-G|\times\int_0^1|G^3F_5\cdots F_9|\mathrm{d}\alpha
                   \nonumber \\
     & +\sup_{\alpha\in[0,1]}|F_5-G|\times\int_0^1|G^4F_6\cdots F_9|\mathrm{d}\alpha
       +\sup_{\alpha\in[0,1]}|F_6-G|\times\int_0^1|G^5F_7\cdots F_9|\mathrm{d}\alpha
                   \nonumber \\
     & +\sup_{\alpha\in[0,1]}|F_7-G|\times\int_0^1|G^6F_8F_9|\mathrm{d}\alpha
       +\sup_{\alpha\in[0,1]}|F_8-G|\times\int_0^1|G^7F_9|\mathrm{d}\alpha
                   \nonumber \\
     & +\sup_{\alpha\in[0,1]}|F_9-G|\times\int_0^1|G^8|\mathrm{d}\alpha.
\end{align}
Hence, it follows from partial summation that
\begin{equation*}
F_i(\alpha)\ll N^{(1-\gamma_i)/3}(\log N)\sum_{\substack{p\leqslant N^{1/3}\\ p=[n^{1/\gamma_i}]}}
e(p^3\alpha),
\end{equation*}
which combined with Lemma \ref{Intersection-of-PS-Primes} and Lemma \ref{Lemma2_2} yields
\begin{align}\label{eq3_3}
         \int_0^1|F_i(\alpha)|^8\mathrm{d}\alpha
= & \,\, N^{8(1-\gamma_i)/3}(\log N)^8\int_0^1\Bigg|\sum_{\substack{p\leqslant N^{1/3}\\ p=[n^{1/\gamma_i}]}}
         e(p^3\alpha)\Bigg|^8\mathrm{d}\alpha
                \nonumber \\
\ll & \,\, N^{8(1-\gamma_i)/3}(\log N)^8\bigg(\frac{N^{\gamma_i/3}}{\log N}\bigg)^{5+\varepsilon}
                \nonumber \\
\ll & \,\, N^{8/3-\gamma_i+\varepsilon}.
\end{align}
Therefore, we can give the upper bound estimate of the nine integrals above in \eqref{eq3_2} by H\"{o}lder's inequality, Lemma \ref{Hua's-inequality} and \eqref{eq3_3}, and deduce that
\begin{align*}
         \int_0^1|G|^8\mathrm{d}\alpha
= & \,\, \int_0^1\bigg|\sum_{p\leqslant N^{1/3}}(\log p)e(p^3\alpha)\bigg|^8\mathrm{d}\alpha
         \ll N^{5/3+\varepsilon},
                 \nonumber \\
            \int_0^1|G^7F_9|\mathrm{d}\alpha
 \ll & \,\, \bigg(\int_0^1|G|^8\mathrm{d}\alpha\bigg)^{7/8}
            \bigg(\int_0^1|F_9|^8\mathrm{d}\alpha\bigg)^{1/8}
            \ll N^{{5}/{3}+(1-\gamma_9)/8+\varepsilon},
                  \nonumber \\
            \int_0^1|G^6F_8F_9|\mathrm{d}\alpha
 \ll & \,\, \bigg(\int_0^1|G|^8\mathrm{d}\alpha\bigg)^{3/4}
            \bigg(\int_0^1|F_8|^8\mathrm{d}\alpha\bigg)^{1/8}
            \bigg(\int_0^1|F_9|^8\mathrm{d}\alpha\bigg)^{1/8}
                  \nonumber \\
 \ll & \,\, N^{{5}/{3}+(1-\gamma_8)/8+(1-\gamma_9)/8+\varepsilon},
                  \nonumber \\
            \int_0^1 |G^5 F_7 F_8 F_9| \mathrm{d}\alpha
 \ll & \,\, \bigg(\int_0^1|G|^8\mathrm{d}\alpha \bigg)^{5/8}
            \bigg(\int_0^1 |F_7|^8 \mathrm{d} \alpha \bigg)^{1/8}
            \bigg(\int_0^1 |F_8|^8 \mathrm{d} \alpha \bigg)^{1/8}
            \bigg(\int_0^1 |F_9|^8 \mathrm{d} \alpha \bigg)^{1/8}
                  \nonumber \\
 \ll & \,\, N^{\frac{5}{3}+\frac{1}{8}\sum_{j=7}^9(1-\gamma_j)+\varepsilon}.
\end{align*}
Similarly, we can obtain
\begin{gather*}
\int_0^1 |G^4 F_6\cdots F_9| \mathrm{d}\alpha  \ll N^{\frac{5}{3}+\frac{1}{8}\sum_{j=6}^9(1-\gamma_j)+\varepsilon}, \\
\int_0^1 |G^3 F_5\cdots F_9| \mathrm{d}\alpha  \ll N^{\frac{5}{3}+\frac{1}{8}\sum_{j=5}^9(1-\gamma_j)+\varepsilon}, \\
\int_0^1 |G^2 F_4\cdots F_9| \mathrm{d}\alpha  \ll N^{\frac{5}{3}+\frac{1}{8}\sum_{j=4}^9(1-\gamma_j)+\varepsilon}, \\
\int_0^1 |G F_3\cdots F_9| \mathrm{d}\alpha  \ll N^{\frac{5}{3}+\frac{1}{8}\sum_{j=3}^9(1-\gamma_j)+\varepsilon}, \\
\int_0^1 |F_2\cdots F_9| \mathrm{d}\alpha  \ll N^{\frac{5}{3}+\frac{1}{8}\sum_{j=2}^9(1-\gamma_j)+\varepsilon}.
\end{gather*}
Hence, according to (\ref{eq1_01}) and (\ref{eq3_2}), it is sufficient to prove that, for $i=1,2,\dots,9$, there holds
\begin{align*}
\sup_{\alpha\in[0,1]}|F_i-G|\ll N^{\frac{1}{3}(1-\Delta_i)-2\varepsilon},
\end{align*}
where
\begin{equation*}
\Delta_i =
\begin{cases}
\displaystyle\frac{3}{8}\sum\limits_{j=i+1}^9(1-\gamma_j), & \textrm{if $1\leqslant i\leqslant 8$,}  \\
\hspace{3em} 0, & \textrm{if \hspace{0.7em} $i=9$.}
\end{cases}
\end{equation*}
Accordingly,
from (\ref{eq3_1}), (\ref{eq3_2}) and the above discussions, it is easy to see that Theorem \ref{Theorem-1} is a corollary of the following proposition.

\begin{proposition}\label{Proposition3_1}
Assume that $1/2<\gamma<1$, $0\leqslant\Delta<1/2$, and
\begin{equation*}
\frac{80}{3}(1-\gamma) + \frac{80}{3} \Delta<1.
\end{equation*}
Then, uniformly for $\alpha\in[0,1]$, we have
\begin{equation}\label{eq3_4}
\frac{1}{\gamma}\sum_{\substack{p \leqslant N^{1/3} \\ p=[n^{1/\gamma}]}} p^{1-\gamma}(\log p)e(p^3\alpha)
=\sum_{p\leqslant N^{1/3}}(\log p)e(p^3\alpha)+O\big(N^{\frac{1}{3}(1-\Delta)-2\varepsilon}\big),
\end{equation}
where the implied constant may depend at most on $\gamma$, $\Delta$ and $\varepsilon$ only.
\end{proposition}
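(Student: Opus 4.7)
The plan is to start from the Piatetski--Shapiro identification: for $\frac{1}{2}<\gamma<1$ a prime $p$ lies in $\mathscr{P}$ if and only if there is an integer $n$ with $p^\gamma\leqslant n<(p+1)^\gamma$, which via $[-x]=-x-\psi(-x)-\tfrac{1}{2}$ yields
$$
\mathbf{1}_{p\in\mathscr{P}} = [-p^\gamma]-[-(p+1)^\gamma] = \gamma p^{\gamma-1} + \psi(-(p+1)^\gamma)-\psi(-p^\gamma) + O(p^{\gamma-2}).
$$
Substituting into the left-hand side of \eqref{eq3_4}, the main piece $\gamma p^{\gamma-1}\cdot p^{1-\gamma}/\gamma=1$ reproduces $\sum_{p\leqslant N^{1/3}}(\log p)e(p^3\alpha)$ exactly, while the $O(p^{\gamma-2})$ tail contributes only $O(\log^2 N)$. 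The proposition is therefore equivalent to the uniform bound
$$
S := \sum_{p\leqslant N^{1/3}}p^{1-\gamma}(\log p)e(p^3\alpha)\big[\psi(-(p+1)^\gamma)-\psi(-p^\gamma)\big] \ll N^{\frac{1}{3}(1-\Delta)-2\varepsilon}.
$$

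Next I would apply the truncated Fourier expansion of $\psi$ from Lemma~\ref{Finite-Fourier-expansion} with a parameter $H$ to be chosen by Lemma~\ref{Optimization-Principle}. After converting $\sum_p(\log p)$ to $\sum_n\Lambda(n)$ at the cost of a negligible prime-power remainder, and absorbing the smooth weight $p^{1-\gamma}$ by partial summation, the problem reduces to establishing uniform bounds for exponential sums of the shape
$$
T_h(\alpha) := \sum_{n\sim X}\Lambda(n)\, e\big(\alpha n^3 - h n^\gamma\big), \qquad X\leqslant N^{1/3},\ 1\leqslant h\ll H^{1+\varepsilon},
$$
with a saving of order $N^{\Delta/3+3\varepsilon}$ over the trivial bound when summed against the weights $\min(1/h,H/h^2)$ provided by Lemma~\ref{Finite-Fourier-expansion}. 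To bound $T_h(\alpha)$ I would invoke Heath--Brown's identity (Lemma~\ref{Heath-Brown-identity}) with $k=3$ and $z\asymp N^{1/9}$, decomposing $\Lambda(n)$ into $O(\log^6 N)$ convolutions and splitting each resulting bilinear sum dyadically into Type~I pieces (smooth inner variable, outer range $M\leqslant N^{2/9}$) and Type~II pieces ($N^{1/9}\leqslant M\leqslant N^{2/9}$).

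For the Type~I contributions I would apply the high-order van der Corput estimate (Lemma~\ref{Van-der-Corput-k-derivatives}) to the smooth inner sum, taking $q$ large enough that the $(q+2)$-nd derivative of $\alpha x^3-hx^\gamma$ is controlled purely by the $hx^\gamma$ term; in this way the $\alpha$-dependence is eliminated after sufficiently many differentiations, and the resulting bound depends only on $h$ and the dyadic parameters. For the Type~II sums I would apply Cauchy--Schwarz in the outer variable followed by Weyl differencing, reducing the problem to counting $4$-tuples $(h_1,h_2,k_1,k_2)$ obeying $|h_1k_1^\gamma-h_2k_2^\gamma|\leqslant 1$; the differencing step again kills the $\alpha$-phase, and Lemma~\ref{Heath-Brown-1983} then supplies the required count. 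Summing over $h$ and applying Srinivasan's optimization principle (Lemma~\ref{Optimization-Principle}) to balance $H$, the Heath--Brown split point, and the $q$ in the derivative estimate, all contributions combine into the stated saving; the numerical threshold $\frac{80}{3}(1-\gamma)+\frac{80}{3}\Delta<1$ should emerge exactly at this final balancing step.

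The principal obstacle is the Type~II sum in the critical range $M\asymp X^{1/2}$, where the Cauchy--Schwarz diagonal contributes $\asymp HXM$ and must be defeated by the off-diagonal term extracted from Lemma~\ref{Heath-Brown-1983}. This forces a careful calibration of $H$ (small enough to control Type~II, large enough that the Fourier remainder of $\psi$ is acceptable) and of $q$ (large enough to neutralize the $\alpha n^3$ phase in Type~I, small enough that the van der Corput bound is non-trivial). A second subtlety is that the whole argument must be \emph{uniform} in $\alpha\in[0,1]$: no major-arc/minor-arc dichotomy is available, so every saving must come from the $-hp^\gamma$ phase, and it is precisely the interplay of that saving with the bilinear structure that pins the admissible range at the coefficient $80/3$.
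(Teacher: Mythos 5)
Your sketch follows essentially the same route as the paper: the indicator identity $\mathds{1}_\gamma(p)=[-p^\gamma]-[-(p+1)^\gamma]$, the truncated Fourier expansion of $\psi$ via Lemma~\ref{Finite-Fourier-expansion}, conversion to $\Lambda$-weighted sums, Heath--Brown's identity with $k=3$, a Type~I / Type~II split, van der Corput with the fourth derivative to neutralize the cubic phase $\alpha n^3$, and Lemma~\ref{Heath-Brown-1983} for the spacing count. Two small imprecisions are worth correcting so that the sketch would actually carry through. First, in the Type~II analysis the Cauchy--Schwarz/differencing step does \emph{not} by itself kill the $\alpha$-phase: after squaring one is left with an inner sum over $m$ carrying the phase $m^3(k_1^3-k_2^3)\alpha+h_1(mk_1+u)^\gamma-h_2(mk_2+u)^\gamma$, which still depends on $\alpha$. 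The paper applies Lemma~\ref{Van-der-Corput-k-derivatives} with $q=2$ to this $m$-sum; it is the fourth derivative in $m$ that annihilates the cubic $\alpha$-term, exactly as in Type~I, and the resulting bound depends on $\lambda=h_1k_1^\gamma-h_2k_2^\gamma$, which is what feeds into Lemma~\ref{Heath-Brown-1983} after a dyadic split in $|\lambda|$ (not a single threshold $|\lambda|\leqslant 1$). Second, Srinivasan's optimization Lemma~\ref{Optimization-Principle} is used in the paper only to choose the partition parameter $Q$ appearing in the Cauchy--Schwarz step; the cutoff $H_0=X^{1-\gamma+\Delta+7\varepsilon}$ and the Heath--Brown split points $\mathfrak{a},\mathfrak{b},\mathfrak{c}$ are chosen explicitly, and the coefficient $\tfrac{80}{3}$ then emerges from the compatibility conditions $\mathfrak{b}<\mathfrak{a}$ and $2(1-\mathfrak{c})<1-\mathfrak{b}$ needed to cover all Heath--Brown configurations. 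With those adjustments the outline matches the paper's argument.
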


In the rest of this section, we shall transform the proof of Proposition \ref{Proposition3_1} to a special exponential sum estimate. We denote
\begin{gather*}
F(\alpha)=\frac{1}{\gamma}\sum_{\substack{p\leqslant N^{1/3}\\ p=[n^{1/\gamma}]}}
p^{1-\gamma}(\log p)e(p^3\alpha),
\end{gather*}
and
\begin{gather*}
G(\alpha)=\sum_{p\leqslant N^{1/3}}(\log p)e(p^3\alpha).
\end{gather*}
For $1/2<\gamma<1$, it is easy to see that
\begin{equation*}
\mathds{1}_{\gamma}(p):=[-p^\gamma]-[-(p+1)^\gamma]
=\begin{cases}
1, & \textrm{if $p=[n^{1/\gamma}]$ for some $n\in\mathbb{N}^+$}, \\
0, & \textrm{otherwise.}
\end{cases}
\end{equation*}
Then, we have
\begin{align*}
         F(\alpha)
= & \,\, \frac{1}{\gamma}\sum_{p\leqslant N^{1/3}}p^{1-\gamma}(\log p)\mathds{1}_{\gamma}(p)e(p^3\alpha)
                  \nonumber \\
= & \,\, \frac{1}{\gamma}\sum_{p\leqslant N^{1/3}}p^{1-\gamma}(\log p)
         \big([-p^{\gamma}]-[-(p+1)^{\gamma}]\big) e(p^3\alpha)
                  \nonumber \\
= & \,\, \frac{1}{\gamma}\sum_{p\leqslant N^{1/3}}p^{1-\gamma}(\log p)
         \big(((p+1)^{\gamma}-p^{\gamma})+(\psi(-(p+1)^{\gamma})-\psi(-p^{\gamma}))\big) e(p^3\alpha)
                  \nonumber \\
=:& \,\, \mathcal{S}_1+\mathcal{S}_2,
\end{align*}
where
\begin{align*}
         \mathcal{S}_1
= & \,\, \frac{1}{\gamma}\sum_{p\leqslant N^{1/3}}p^{1-\gamma}(\log p)
         \big((p+1)^{\gamma}-p^{\gamma}\big)e(p^3\alpha),
                  \nonumber \\
         \mathcal{S}_2
= & \,\, \frac{1}{\gamma}\sum_{p\leqslant N^{1/3}}p^{1-\gamma}(\log p)
         \big(\psi(-(p+1)^{\gamma})-\psi(-p^{\gamma})\big)e(p^3\alpha).
\end{align*}
For $\mathcal{S}_1$, we deduce that
\begin{align}\label{eq3_5}
         \mathcal{S}_1
= & \,\, \frac{1}{\gamma}\sum_{p\leqslant N^{1/3}} p^{1-\gamma} (\log p) p^{\gamma}
         \bigg(\bigg(1+\frac{1}{p}\bigg)^{\gamma}-1\bigg) e(p^3\alpha)
            \nonumber \\
= & \,\, \sum_{p\leqslant N^{1/3}}(\log p)e(p^3\alpha)+O\Bigg(\sum_{p\leqslant N^{1/3}} \frac{\log p}{p}\Bigg)
            \nonumber \\
= & \,\, G(\alpha)+O(\log N) .
\end{align}
By partial summation, we have
\begin{align}\label{S_2-error}
         \mathcal{S}_2
= & \,\, \frac{1}{\gamma} \sum_{p \leqslant N^{1/3}} p^{1-\gamma}(\log p)e(p^3\alpha)
         \big(\psi(-(p+1)^{\gamma})-\psi(-p^{\gamma})\big)
              \nonumber \\
= & \,\, \frac{1}{\gamma}\sum_{n\leqslant N^{1/3}}n^{1-\gamma}\Lambda(n)e(n^3\alpha) \big(\psi(-(n+1)^{\gamma})-\psi(-n^{\gamma})\big)+O(N^{1/2-\gamma/3}),
\end{align}
where the $O$--term on the right--hand side of (\ref{S_2-error}) is admissible since $2(1-\gamma)+2\Delta<1$. Thus,
it suffices to show that the following estimate
\begin{equation}\label{eq3_6}
\mathcal{S}(X)=\sum_{n\sim X}n^{1-\gamma}\Lambda(n)e(n^3\alpha)\big(\psi(-(n+1)^{\gamma})-\psi(-n^{\gamma})\big)  \ll X^{1-\Delta-6\varepsilon}
\end{equation}
holds for any $X\ll N^{1/3}$. Taking $H_0=X^{1-\gamma+\Delta+7\varepsilon}$, it follows from Lemma \ref{Finite-Fourier-expansion} that
\begin{align}
         \mathcal{S}(X)
= & \,\, \sum_{n\sim X}n^{1-\gamma}\Lambda(n)e(n^3\alpha)\sum_{0<|h|\leqslant H_0}
         \frac{e(h(n+1)^\gamma)-e(hn^\gamma)}{2\pi ih}
                \nonumber\\
  & \,\, +O\bigg(X^{1-\gamma}(\log X)\sum_{n\sim X}\min\bigg\{1,\frac{1}{H_0\|n^{\gamma}\|}\bigg\}\bigg). \label{eq3_7}
\end{align}
For the sum in the error term of \eqref{eq3_7}, by Lemma \ref{vander-Corput-method}, we deduce that
\begin{align}\label{eq3_8}
   & \,\, \sum_{n\sim X} \min\bigg\{1,\frac{1}{H_0 \|n^{\gamma}\|}\bigg\}
          =\sum_{n\sim X}\sum_{h=-\infty}^{\infty}a(h)e(hn^\gamma)
          \ll \sum_{h=-\infty}^{\infty}|a(h)|\Bigg|\sum_{n\sim X}e(hn^\gamma)\Bigg|
                 \nonumber \\
  \ll & \,\, \Bigg(\frac{X\log H_0}{H_0}+\sum_{0<|h|\leqslant H_0}\frac{1}{|h|}\Bigg|
             \sum_{n\sim X}e(hn^\gamma)\Bigg|+\sum_{|h|>H_0}\frac{H_0}{h^2}
             \Bigg|\sum_{n\sim X}e(hn^\gamma)\Bigg|\Bigg)
                 \nonumber \\
 \ll & \,\, \frac{X\log H_0}{H_0}+\sum_{0<|h|\leqslant H_0}\frac{1}{|h|}\big(|h|^{1/2}X^{\gamma/2}
            +|h|^{-1/2}X^{1-\gamma/2}\big)
                 \nonumber \\
     & \,\, +\sum_{|h|>H_0}\frac{H_0}{h^2}\big(|h|^{1/2}X^{\gamma/2}+|h|^{-1/2}X^{1-\gamma/2}\big)
                 \nonumber \\
 \ll & \,\, \bigg(\frac{X\log H_0}{H_0}+H_0^{1/2}X^{\gamma/2}+X^{1-\gamma/2}\bigg)
                 \nonumber \\
 \ll & \,\, X^{1-(1-\gamma)-\Delta-6\varepsilon}+X^{1/2+\Delta/2+4\varepsilon}+X^{1-\Delta/2}
                 \nonumber \\
 \ll & \,\, X^{1-(1-\gamma)-\Delta-6\varepsilon},
\end{align}
provided that $3(1-\gamma)+3\Delta<1$. According to (\ref{eq3_6}), (\ref{eq3_7}) and (\ref{eq3_8}), we only need to prove that, for $h\sim H\leqslant H_0$, there holds
\begin{equation}\label{eq3_9}
\sum_{h\sim H} \frac{1}{h}\bigg|\sum_{n\sim X} \Lambda(n) n^{1-\gamma} e(n^3\alpha) \big(e(h(n+1)^\gamma)-e(hn^\gamma)\big)\bigg|\ll X^{1-\Delta-6\varepsilon}.
\end{equation}
Set $H_1=X^{1-\gamma}$. When $H\leqslant H_1$, we write
\begin{equation*}
e(h(n+1)^\gamma)-e(hn^\gamma)=2\pi i h \gamma \int_0^1 (n+u)^{\gamma-1}e(h(n+u)^\gamma) \mathrm{d} u.
\end{equation*}
Then, we derive that
\begin{align*}
    & \,\, \sum_{h\sim H}\frac{1}{h}\bigg|\sum_{n\sim X}\Lambda(n)n^{1-\gamma}e(n^3\alpha)
           \big(e(h(n+1)^\gamma)-e(hn^\gamma)\big)\bigg|
                  \nonumber \\
\ll & \,\, \sum_{h\sim H}\bigg|\sum_{n\sim X}\Lambda(n)n^{1-\gamma}\int_0^1(n+u)^{\gamma-1}
           e\big(n^3\alpha+h(n+u)^\gamma\big)\mathrm{d}u\bigg|
                  \nonumber \\
\ll & \,\, \sum_{h\sim H}\max_{0\leqslant u\leqslant1}\bigg|\sum_{n\sim X}\Lambda(n)n^{1-\gamma}
           (n+u)^{\gamma-1}e\big(n^3\alpha+h(n+u)^\gamma\big)\bigg|
                  \nonumber \\
\ll & \,\, \max_{0\leqslant u\leqslant1}\sum_{h\sim H}\bigg|\sum_{n\sim X}
           \Lambda(n)e\big(n^3\alpha+h(n+u)^\gamma\big) \bigg|.
\end{align*}
When $H_1<H\leqslant H_0$, we treat these two terms separately. According to partial summation, we deduce that
\begin{align*}
    & \,\, \sum_{h\sim H}\frac{1}{h}\bigg|\sum_{n\sim X}\Lambda(n) n^{1-\gamma} e(n^3\alpha)
           \big(e(h(n+1)^\gamma)-e(hn^\gamma)\big)\bigg|
                  \nonumber \\
\ll & \,\, \frac{H_1}{H}\sum_{h\sim H}\bigg|\sum_{n\sim X}\Lambda(n)e(n^3\alpha)
           \big(e(h(n+1)^\gamma)-e(hn^\gamma)\big)\bigg|
                  \nonumber \\
\ll & \,\, \frac{H_1}{H}\sum_{h\sim H}\bigg|\sum_{n\sim X}\Lambda(n)e(n^3\alpha+h(n+1)^\gamma)\bigg|+
           \frac{H_1}{H}\sum_{h\sim H}\bigg|\sum_{n\sim X}\Lambda(n)e(n^3\alpha+hn^\gamma)\bigg|.
\end{align*}
Based on the above arguments, in order to prove (\ref{eq3_9}), it is sufficient to show that
\begin{equation}\label{eq3_10}
\mathscr{S}(X;\alpha,H,\gamma):=\min\bigg\{1,\frac{H_1}{H}\bigg\}\sum_{h\sim H}\bigg|\sum_{n\sim X} \Lambda(n)
e(n^3\alpha + h(n+u)^\gamma)\bigg|\ll X^{1-\Delta-6\varepsilon}
\end{equation}
holds for $0\leqslant u \leqslant 1$ and $H\leqslant H_0$, uniformly. The proof of (\ref{eq3_10}) will be given in Section
\ref{exponential}.

It is easy to see that (\ref{eq3_6}) follows from (\ref{eq3_7}), (\ref{eq3_8}), (\ref{eq3_9}) and (\ref{eq3_10}). Proposition \ref{Proposition3_1} now follows from (\ref{eq3_5}) and (\ref{eq3_6}).

\section{Estimations of exponential sums}\label{exponential}
Suppose that $1 / 2<\gamma<1$, $m \sim M$, $k \sim K$, $M K \asymp X$, $h$ is an integer satisfying $h \sim H \leqslant H_0=X^{1-\gamma+\Delta+7\varepsilon}$ and $H_1=X^{1-\gamma}$. Define
\begin{align*}
S_I(M,K;X) & :=\min\bigg\{1,\frac{H_1}{H}\bigg\}\sum_{h\sim H}\bigg|\sum_{m \sim M}\sum_{k \sim K}a_m
 e\big(m^3k^3\alpha+h(m k+u)^{\gamma}\big)\bigg|,
      \nonumber \\
S_{II}(M,K;X) & :=\min\bigg\{1,\frac{H_1}{H}\bigg\}\sum_{h\sim H}\bigg|\sum_{m\sim M}\sum_{k\sim K}a_mb_k
e\big(m^3k^3\alpha+h(m k+u)^{\gamma}\big)\bigg|,
\end{align*}
where $a_m$ and $b_k$ are complex numbers subject to $|a_m|\ll1$, $|b_k|\ll1$.
\begin{proposition}\label{Proposition4_1}
Assume that $a_m \ll 1$, and
\begin{equation}\label{eq4_1}
16(1-\gamma)+16\Delta<1.
\end{equation}
Let $MK\asymp X$, $1\ll M \ll X^\mathfrak{a}$, where
\begin{equation}\label{eq4_2}
\mathfrak{a}=\min\bigg\{\frac{3}{4}-\frac{7}{2}(1-\gamma)-\frac{15}{4}\Delta-27\varepsilon, \,\, 1-8(1-\gamma)-8\Delta-56\varepsilon\bigg\}.
\end{equation}
Then, we have
\begin{equation*}
S_{I}(M,K;X)\ll X^{1-\Delta-7\varepsilon}.
\end{equation*}
\end{proposition}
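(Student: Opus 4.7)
The plan is to remove the coefficient $a_m$ by the triangle inequality (available since $|a_m|\ll 1$), and then to estimate the resulting inner $k$-sum for each fixed pair $(h,m)$ by the higher-derivative van der Corput bound of Lemma \ref{Van-der-Corput-k-derivatives}. Writing $\phi_{m,h}(k):=m^3k^3\alpha+h(mk+u)^{\gamma}$, the triangle inequality reduces us to bounding
\[
\min\Big\{1,\frac{H_1}{H}\Big\}\sum_{h\sim H}\sum_{m\sim M}\bigg|\sum_{k\sim K}e(\phi_{m,h}(k))\bigg|.
\]

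The crucial observation, which makes a uniform-in-$\alpha$ bound possible, is that the cubic polynomial $m^3k^3\alpha$ has vanishing fourth (and higher) derivative in $k$; hence for every $q\geqslant 2$,
\[
\phi_{m,h}^{(q+2)}(k)=h\gamma(\gamma-1)\cdots(\gamma-q-1)\,m^{q+2}(mk+u)^{\gamma-q-2}
\]
is independent of $\alpha$ and has magnitude $\asymp HM^{\gamma}K^{\gamma-q-2}$ throughout $k\sim K$. Taking $q=2$ (so $Q=4$) and $\lambda\asymp HM^{\gamma}K^{\gamma-4}$ in Lemma \ref{Van-der-Corput-k-derivatives} then yields
\[
\bigg|\sum_{k\sim K}e(\phi_{m,h}(k))\bigg|\ll K\lambda^{1/14}+K^{7/8}+K^{9/16}\lambda^{-1/8},
\]
uniformly in $\alpha$.

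Substituting this bound and summing over $h\sim H$ and $m\sim M$ produces three contributions, each of which will be analysed together with the factor $\min\{1,H_1/H\}$. (i) The first term leads to the combined $H$-factor $\min\{1,H_1/H\}\cdot H^{15/14}$, whose maximum on $[1,H_0]$ is attained at $H=H_0$ and matches the first entry in $\mathfrak{a}$, namely $3/4-(7/2)(1-\gamma)-(15/4)\Delta-27\varepsilon$. (ii) The second term leads to $\min\{1,H_1/H\}\cdot H\leqslant H_1=X^{1-\gamma}$ and matches the second entry $1-8(1-\gamma)-8\Delta-56\varepsilon$. (iii) The third term leads to $\min\{1,H_1/H\}\cdot H^{7/8}$, maximised at $H=H_1$; it produces only a lower bound of the shape $M\gg X^{16(1-\gamma)+16\Delta-1}$, which is rendered trivial by the hypothesis \eqref{eq4_1}.

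The main obstacle will be the careful exponent bookkeeping when combining the three terms of the van der Corput estimate with the piecewise $\min$-factor and the range $H\leqslant H_0$: one has to verify that each of (i)--(iii) stays within $O(X^{1-\Delta-7\varepsilon})$ throughout $1\ll M\ll X^{\mathfrak{a}}$, and that the binding constraints on $M$ arising from (i) and (ii) match exactly the two entries of the minimum defining $\mathfrak{a}$ in \eqref{eq4_2}.
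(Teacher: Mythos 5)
Your proposal is correct and follows essentially the same route as the paper: remove $a_m$ by the triangle inequality, compute that the fourth $k$-derivative $\phi_{m,h}^{(4)}(k)\asymp HM^{\gamma}K^{\gamma-4}\asymp HX^{\gamma}K^{-4}$ kills the cubic term and is independent of $\alpha$, apply Lemma~\ref{Van-der-Corput-k-derivatives} with $q=2$ ($Q=4$), and then optimize the $H$-factor against $\min\{1,H_1/H\}$ separately for the three resulting terms. Your bookkeeping for (i)--(iii) also matches the paper exactly: term (i) maximized at $H=H_0$ gives the exponent constraint $\frac34-\frac72(1-\gamma)-\frac{15}{4}\Delta-O(\varepsilon)$, term (ii) maximized at $H_1$ gives $1-8(1-\gamma)-8\Delta-O(\varepsilon)$, and term (iii) maximized at $H_1$ reduces to the hypothesis \eqref{eq4_1} since the $M$-exponent $-1/16$ makes the constraint vacuous for $M\geqslant 1$.
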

\begin{proof}
Let
\begin{equation*}
f(k)=m^3k^3\alpha+h(mk+u)^\gamma.
\end{equation*}
It is easy to see that
\begin{equation*}
f^{(4)}(k) \asymp H X^\gamma K^{-4}.
\end{equation*}
Hence, by Lemma \ref{Van-der-Corput-k-derivatives} with $q=2$ and $1 \leqslant H \leqslant H_0$,  we deduce that
\begin{align*}
             S_I(M,K;X)
  \ll & \,\, \min\bigg\{1,\frac{H_1}{H}\bigg\}\sum_{h\sim H}\sum_{m\sim M}\bigg|\sum_{k\sim K}
             e\big(m^3k^3\alpha+h(mk+u)^\gamma\big)\bigg|
                 \nonumber \\
\ll & \,\, \min\bigg\{1,\frac{H_1}{H}\bigg\}HM\big(K(HX^\gamma K^{-4})^{1/14}+K^{7/8}+K^{9/16}
           (HX^\gamma K^{-4})^{-1/8}\big)
                 \nonumber \\
\ll & \,\, \min\bigg\{1,\frac{H_1}{H}\bigg\}\big(H^{15/14}X^{5/7+\gamma/14}M^{2/7}
           +HX^{7/8}M^{1/8}+H^{7/8}X^{17/16-\gamma/8}M^{-1/16}\big)
                 \nonumber \\
\ll & \,\, H_1H_0^{1/14}X^{5/7+\gamma/14}M^{2/7}+H_1X^{7/8}M^{1/8}+H_1^{7/8}X^{17/16-\gamma/8}
           M^{-1/16}
                 \nonumber \\
\ll & \,\, X^{1-\Delta-7\varepsilon},
\end{align*}
where we use (\ref{eq4_1}) and (\ref{eq4_2}) in the last estimate.
\end{proof}

\begin{proposition}\label{Proposition4_2}
Assume that $a_m \ll 1$, $b_k \ll 1$. Let $MK\asymp X$, $X^\mathfrak{b} \leqslant M \leqslant X^\mathfrak{c}$, where
\begin{gather}
\mathfrak{b} = 16(1-\gamma)+16\Delta+112\varepsilon, \label{eq4_3}  \\
\mathfrak{c} = \min\bigg\{\frac{16}{17}-\frac{16}{17}(1-\gamma)-\frac{32}{17}\Delta-\frac{224}{17}\varepsilon, \,\,\, 2-32(1-\gamma)-32\Delta-224\varepsilon\bigg\}.\label{eq4_4}
\end{gather}
Then one has
\begin{equation*}
S_{II}(M,K;X) \ll X^{1-\Delta-7\varepsilon}.
\end{equation*}
\end{proposition}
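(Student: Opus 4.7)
The plan is to apply the standard Type II treatment: use Cauchy--Schwarz to eliminate the coefficients $a_m$, open the resulting inner square and interchange summation order, and estimate the remaining exponential sum in $m$ via a high-derivative van der Corput bound (Lemma~\ref{Van-der-Corput-k-derivatives}) that sidesteps the uniformity in $\alpha$ by differentiating away the cubic part of the phase.

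First I would apply Cauchy--Schwarz over the pair $(h,m)$, using $|a_m|\ll 1$, to obtain
\begin{equation*}
S_{II}(M,K;X)^{2} \ll \min\bigg\{1,\frac{H_{1}^{2}}{H^{2}}\bigg\}\,HM\sum_{h\sim H}\sum_{m\sim M}\bigg|\sum_{k\sim K}b_{k}\,e\bigl(m^{3}k^{3}\alpha+h(mk+u)^{\gamma}\bigr)\bigg|^{2}.
\end{equation*}
Expanding the inner square and pushing the pair $(k_{1},k_{2})$ to the outside yields
\begin{equation*}
\sum_{k_{1},k_{2}\sim K}b_{k_{1}}\overline{b_{k_{2}}}\sum_{h\sim H}\sum_{m\sim M}e\bigl(m^{3}(k_{1}^{3}-k_{2}^{3})\alpha+h[(mk_{1}+u)^{\gamma}-(mk_{2}+u)^{\gamma}]\bigr).
\end{equation*}
The diagonal $k_{1}=k_{2}$ contributes $O(HMK)$, which after the outer square root gives a term of size $\min\{1,H_{1}/H\}\,H\sqrt{MX}\ll H_{1}\sqrt{MX}$, comfortably below $X^{1-\Delta-7\varepsilon}$ throughout the range prescribed by (\ref{eq4_3})--(\ref{eq4_4}).

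For the off-diagonal range $k_{1}\neq k_{2}$, the decisive observation is that the $\alpha$-dependent part $m^{3}(k_{1}^{3}-k_{2}^{3})\alpha$ of the phase is cubic in $m$, so its fourth $m$-derivative vanishes identically. Only the Piatetski--Shapiro contribution survives after four $m$-differentiations, and a direct computation gives
\begin{equation*}
\bigl|\partial_{m}^{4}\bigl(h[(mk_{1}+u)^{\gamma}-(mk_{2}+u)^{\gamma}]\bigr)\bigr|\;\asymp\;H|k_{1}-k_{2}|\,M^{\gamma-4}K^{\gamma-1},
\end{equation*}
uniformly in $\alpha$. Applying Lemma~\ref{Van-der-Corput-k-derivatives} with $q=2$ (so $Q=4$) to the $m$-sum then produces three competing error terms of shape $M\lambda^{1/14}$, $M^{7/8}$ and $M^{9/16}\lambda^{-1/8}$, where $\lambda:=H|k_{1}-k_{2}|M^{\gamma-4}K^{\gamma-1}$. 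Summing over $h\sim H$ and over $(k_{1},k_{2})$ is then routine for the first two terms; for the third I would dyadically split the $|k_{1}-k_{2}|$ range and invoke Lemma~\ref{Heath-Brown-1983} on the pairs with $|h_{1}k_{1}^{\gamma}-h_{2}k_{2}^{\gamma}|\leq\Delta$.

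Finally, Lemma~\ref{Optimization-Principle} balances the resulting family of contributions against $M$ and $K$, giving a collection of restrictions of the form $M\gg X^{b_{j}}$ and $M\ll X^{c_{i}}$ whose envelope is exactly (\ref{eq4_3})--(\ref{eq4_4}). The main obstacle will be the bookkeeping: the two $H$-regimes created by $\min\{1,H_{1}/H\}$, the three error terms of Lemma~\ref{Van-der-Corput-k-derivatives}, and the diagonal/off-diagonal dichotomy together produce many competing bounds, each of which must be verified to sit under $X^{1-\Delta-7\varepsilon}$. The two expressions inside the $\min$ defining $\mathfrak{c}$ in (\ref{eq4_4}) reflect precisely the two distinct $H$-regimes, while the lower bound $\mathfrak{b}$ in (\ref{eq4_3}) is the threshold at which the $M^{7/8}$ error term (the one independent of $\lambda$) ceases to be admissible.
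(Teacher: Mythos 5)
Your plan applies Cauchy--Schwarz over the pair $(h,m)$, squaring the $k$-sum, so that one and the same $h$ appears on both sides of the expanded square. This is a genuinely different decomposition from the paper's, and it loses the crucial ingredient. The paper first removes the absolute value by unimodular coefficients $c_h$, then partitions the pairs $(h,k)$ into classes $\mathscr{W}_q$ ($1\leqslant q\leqslant Q$) on which $hk^\gamma$ varies by at most $4HK^\gamma/Q$, and only then applies Cauchy--Schwarz over $(q,m)$. The squared expression then involves two independent pairs $(h_1,k_1),(h_2,k_2)$ from the same $\mathscr{W}_q$, so the quantity $\lambda=h_1k_1^\gamma-h_2k_2^\gamma$ governing $f^{(4)}(m)$ is forced to satisfy $|\lambda|\leqslant 4HK^\gamma Q^{-1}$. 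That gives a saving of $Q^{-1/14}$ in the leading van der Corput term, and $Q$ is precisely the parameter that Lemma~\ref{Optimization-Principle} is later used to optimise.

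In your setup the analogous quantity is $h(k_1^\gamma-k_2^\gamma)$, which is generically of size $HK^\gamma$, and there is no free parameter left. Carrying your estimate through, the $M\lambda^{1/14}$ term contributes $\asymp\min\{1,H_1/H\}\,H^{29/28}X^{1+\gamma/28}M^{-1/7}$ to $S_{II}$, which is exactly the paper's leading term frozen at $Q=1$. Requiring this to be $\ll X^{1-\Delta-7\varepsilon}$ uniformly for $H\leqslant H_0$ forces $M\gg X^{1/4+7(1-\gamma)+7\Delta+O(\varepsilon)}$. The additive constant $1/4$ is fatal: the proposition only guarantees $M\geqslant X^{\mathfrak{b}}$ with $\mathfrak{b}=16(1-\gamma)+16\Delta+112\varepsilon$, and $16(1-\gamma)+16\Delta\geqslant 1/4+7(1-\gamma)+7\Delta$ amounts to $(1-\gamma)+\Delta\geqslant 1/36$, whereas the downstream application of Proposition~\ref{Proposition3_1} needs the full range $(1-\gamma)+\Delta<3/80$ and in particular values far below $1/36$ (for Corollary~\ref{Corollary1_2} one has $1-\gamma=3/320$ and $\Delta_9=0$). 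So the estimate fails throughout the bottom of the allowed $M$-range. This is a missing idea, not a bookkeeping slip: without the $\mathscr{W}_q$ decomposition (or some replacement that constrains the size of the relevant bilinear quantity) the claimed exponents do not follow.

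Two further inconsistencies confirm the plan has drifted away from its own setup. You invoke Lemma~\ref{Heath-Brown-1983} on pairs with $|h_1k_1^\gamma-h_2k_2^\gamma|\leqslant\Delta$, but after your Cauchy--Schwarz there is a single $h$ and no pair $(h_1,h_2)$; the two-$h$ count is a by-product of the $\mathscr{W}_q$ step you have not taken. And you attribute the two expressions inside the $\min$ defining $\mathfrak{c}$ to the two $H$-regimes, whereas in fact they arise from the two pieces of Lemma~\ref{Heath-Brown-1983} applied to the $M^{9/16}\lambda^{-1/8}$ term: the $\Delta HK^{2-\gamma}$ piece yields $Q^{1/8}H^{15/8}X^{2-\gamma/8}M^{1/16}$ in \eqref{eq4_11}, and the $HK\log X$ tail yields $QHXM^{17/16}$; both contributions occur identically in each $H$-regime (compare \eqref{eq4_9} and \eqref{eq4_10}), while $\mathfrak{b}$ is indeed determined by the $M^{7/8}$ term as you say.
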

\begin{proof}
For any fixed $u\in[0,1]$ and $H\leqslant H_0$, we define the complex number $c_h$ by means of the equation
\begin{equation*}
\bigg|\sum_{m\sim M}\sum_{k\sim K}a_m b_k e\big(m^3k^3\alpha+h(mk+u)^\gamma\big)\bigg|=
c_h\sum_{m\sim M}\sum_{k\sim K}a_mb_ke\big(m^3k^3\alpha+h(mk+u)^\gamma\big),
\end{equation*}
for $h\sim H$ and by taking $c_h=0$ otherwise. Plainly, one has $|c_h|=1$. Denote by $Q$ a parameter subject to
$Q\in[1,HK(\log X)^{-1}]$, which will be chosen later. We decompose the collection of available pairs $(h,k)$ into
sets $\mathscr{W}_q\,(1\leqslant q\leqslant Q)$, defined by
\begin{equation*}
\mathscr{W}_q=\bigg\{(h,k):\,h\sim H,\,\,k\sim K,\,\,\frac{4HK^\gamma(q-1)}{Q}<hk^\gamma\leqslant
\frac{4HK^\gamma q}{Q}\bigg\}.
\end{equation*}
Then, we obtain that
\begin{align*}
           \mathfrak{S}
 := & \,\, \sum_{h\sim H}\bigg|\sum_{m\sim M}\sum_{k\sim K}a_mb_k
           e\big(m^3k^3\alpha+h(m k+u)^{\gamma}\big)\bigg|
                  \nonumber \\
  = & \,\, \sum_{h\sim H}c_h\sum_{m\sim M}\sum_{k\sim K}a_mb_ke\big(m^3k^3\alpha+h(mk+u)^\gamma\big)
                  \nonumber \\
  = & \,\, \sum_{1\leqslant q\leqslant Q}\sum_{m\sim M}a_m\sum_{(h,k)\in\mathscr{W}_q}b_k c_h
           e\big(m^3k^3\alpha+h(mk+u)^\gamma\big).
\end{align*}
It follows from Cauchy's inequality and splitting argument that
\begin{align*}
            |\mathfrak{S}|^2
 \ll & \, QM\sum_{1\leqslant q\leqslant Q}\sum_{m\sim M}
            \Bigg|\mathop{\sum\sum}_{\substack{(h,k)\in\mathscr{W}_q\\ mk\sim X}}
            b_kc_he\big(m^3k^3\alpha+h(mk+u)^{\gamma}\big)\Bigg|^2
                    \nonumber \\
 \ll & \, QM \sum_{1\leqslant q\leqslant Q}\mathop{\sum\sum}_{(h_1,k_1)\in\mathscr{W}_q}
            \mathop{\sum\sum}_{(h_2,k_2)\in\mathscr{W}_q}
            \Bigg|\sum_{\substack{m\sim M\\ mk_1\sim X\\ mk_2\sim X}}
            e\big(m^3(k_1^3-k_2^3)\alpha+h_1(mk_1+u)^{\gamma}-h_2(mk_2+u)^{\gamma}\big)\Bigg|
                    \nonumber \\
\ll & \, QM \mathop{\sum_{h_1\sim H}\sum_{h_2\sim H}\sum_{k_1\sim K}\sum_{k_2\sim K}}_
           {|h_1k_1^\gamma-h_2k_2^\gamma |\leqslant 4HK^\gamma Q^{-1}}
           \Bigg|\sum_{\substack{m\sim M\\ mk_1\sim X\\ mk_2\sim X}} e\big(m^3 (k_1^3-k_2^3)\alpha
+ h_1(m k_1+u)^{\gamma} - h_2(m k_2+u)^{\gamma}\big)\Bigg|.
\end{align*}
Write
\begin{equation*}
f(m)= m^3(k_1^3-k_2^3)\alpha + h_1(m k_1+u)^{\gamma} - h_2(m k_2+u)^{\gamma},
\end{equation*}
and $\lambda = h_1k_1^\gamma-h_2k_2^\gamma$. It is easy to see that
\begin{align*}
f^{(4)}(m) & = \gamma(\gamma-1)(\gamma-2)(\gamma-3)\big(h_1k_1^4(m k_1+u)^{\gamma-4} - h_2k_2^4(m k_2+u)^{\gamma-4}\big) \\
& = \gamma(\gamma-1)(\gamma-2)(\gamma-3)m^{\gamma-4}(h_1k_1^\gamma-h_2k_2^\gamma) + O\bigg(\frac{H}{H_1M^4}\bigg) \asymp |\lambda|M^{\gamma-4},
\end{align*}
provided that $HH_1^{-1}M^{-\gamma}\ll|\lambda|\leqslant4HK^{\gamma}Q^{-1}$.

First, we consider the case $|\lambda|\ll HH_1^{-1}M^{-\gamma}$. If $H\leqslant H_1$, then $|\lambda|\ll M^{-\gamma}$, and we shall use the trivial estimate $M$ to bound the innermost sum over $m$. It is easy to see that the contribution of $M$ to $|\mathfrak{S}|^2$ is
\begin{align}\label{eq4_5}
\ll & \,\, QM^2\mathop{\sum_{h_1\sim H}\sum_{h_2\sim H}\sum_{k_1\sim K}\sum_{k_2\sim K}}_
           {|\lambda|\ll M^{-\gamma}} 1
\ll  QM^2\cdot\mathscr{N}(M^{-\gamma})
                    \nonumber \\
\ll & \,\, QM^2(M^{-\gamma}HK^{2-\gamma}+HK\log X)
                    \nonumber \\
\ll & \,\, QHX^{2-\gamma}+QHXM\log X\ll QHXM\log X,
\end{align}
provided that $M \gg X^{1-\gamma}$. If $H_1 < H \leqslant H_0$, we also use the trivial estimate $M$ to bound the innermost sum over $m$, and thus the contribution is
\begin{align}\label{eq4_6}
\ll & \,\,  QM^2 \mathop{\sum_{h_1\sim H}\sum_{h_2\sim H}\sum_{k_1\sim K}\sum_{k_2\sim K}}
            _{|\lambda|\ll HH_1^{-1}M^{-\gamma}} 1
\ll   QM^2\cdot \mathscr{N}(HH_1^{-1}M^{-\gamma})
                    \nonumber \\
\ll & \,\, QM^2(HH_1^{-1}M^{-\gamma}\cdot HK^{2-\gamma}+HK\log X)
                    \nonumber \\
\ll & \,\, QH^2H_1^{-1}X^{2-\gamma}+QHXM\log X
                    \nonumber \\
\ll & \,\, QHXM\log X,
\end{align}
provided that $M\gg X^{1-\gamma+\Delta+\varepsilon}$. Now, we consider the case with $\lambda$ being the magnitude
$HH_1^{-1} M^{-\gamma}\ll |\lambda| \leqslant 4HK^{\gamma}Q^{-1}$. By Lemma \ref{Van-der-Corput-k-derivatives} with $q=2$, we obtain
\begin{equation*}
\sum_{m\sim M} e(f(m)) \ll M^{5/7+\gamma/14}|\lambda|^{1/14}+M^{7/8}+M^{17/16-\gamma/8}|\lambda|^{-1/8}.
\end{equation*}
By noting that $Q\in [1,HK(\log X)^{-1}]$, it follows from Lemma \ref{Heath-Brown-1983} that the contribution of $M^{5/7+\gamma/14}|\lambda|^{1/14}$ to $|\mathfrak{S}|^2$ is
\begin{align}\label{eq4_7}
\ll & \,\, QM\cdot M^{5/7+\gamma/14}\mathop{\sum_{h_1\sim H}\sum_{h_2\sim H}\sum_{k_1\sim K}\sum_{k_2\sim K}}
           _{|\lambda|\leqslant 4HK^\gamma Q^{-1}} |\lambda|^{1/14}
                      \nonumber \\
\ll & \,\, QM^{12/7+\gamma/14} (HK^\gamma Q^{-1})^{1/14}\cdot\mathscr{N}(4HK^\gamma Q^{-1})
                      \nonumber \\
\ll & \,\, QM^{12/7+\gamma/14} (HK^\gamma Q^{-1})^{1/14}(H^2K^2Q^{-1}+HK\log X )
                      \nonumber \\
\ll & \,\, Q^{-1/14}H^{29/14}M^{12/7+\gamma/14}K^{2+\gamma/14}
                      \nonumber \\
\ll & \,\, Q^{-1/14}H^{29/14}X^{2+\gamma/14}M^{-2/7},
\end{align}
and the contribution of $M^{7/8}$ to $|\mathfrak{S}|^2$ is
\begin{align}\label{eq4_8}
\ll & \,\,  QM\cdot M^{7/8} \mathop{\sum_{h_1\sim H}\sum_{h_2\sim H}\sum_{k_1\sim K}\sum_{k_2\sim K}}
           _{|\lambda|\leqslant 4HK^\gamma Q^{-1}} 1
                      \nonumber \\
\ll & \,\,  QM^{15/8}\cdot\mathscr{N}(4HK^\gamma Q^{-1})
                      \nonumber \\
\ll & \,\,  QM^{15/8}(H^2K^2Q^{-1}+HK\log X)
                      \nonumber \\
\ll & \,\,  H^2M^{15/8}K^2
\ll   H^{2}X^{2}M^{-1/8}.
\end{align}
When $H_1<H\leqslant H_0$, it follows from the splitting argument and Lemma \ref{Heath-Brown-1983} that the contribution of $M^{17/16-\gamma/8}|\lambda|^{-1/8}$ to $|\mathfrak{S}|^2$ is
\begin{align}\label{eq4_9}
\ll & \,\, QM\cdot M^{17/16-\gamma/8}\mathop{\sum_{h_1\sim H}\sum_{h_2\sim H}\sum_{k_1\sim K}\sum_{k_2\sim K}}
           _{M^{-\gamma}\ll|\lambda|\leqslant4HK^\gamma Q^{-1}}|\lambda|^{-1/8}
                   \nonumber \\
\ll & \,\, (\log X)QM^{33/16-\gamma/8}\max_{M^{-\gamma}\leqslant\Delta\leqslant4HK^{\gamma}Q^{-1}}
           \mathscr{N}(2\Delta)\cdot\Delta^{-1/8}
                   \nonumber \\
\ll & \,\, (\log X)QM^{33/16-\gamma/8}\max_{M^{-\gamma}\leqslant \Delta \leqslant 4HK^{\gamma}Q^{-1}}
           (\Delta HK^{2-\gamma}+HK\log X)\Delta^{-1/8}
                   \nonumber \\
\ll & \,\, (\log X)^2QM^{33/16-\gamma/8}\big((HK^{\gamma}Q^{-1})^{7/8}HK^{2-\gamma}+M^{\gamma/8}HK\big)
                   \nonumber \\
\ll & \,\, (\log X)^2\big(Q^{1/8}H^{15/8}M^{33/16-\gamma/8}K^{2-\gamma/8}+QHM^{33/16}K\big)
                   \nonumber \\
\ll & \,\, (\log X)^2\big(Q^{1/8}H^{15/8}X^{2-\gamma/8}M^{1/16}+QHXM^{17/16}\big).
\end{align}
Similarly, when $H \leqslant H_1$, the contribution of $M^{17/16-\gamma/8}|\lambda|^{-1/8}$ to
$|\mathfrak{S}|^2$ is
\begin{align}\label{eq4_10}
\ll & \,\,  QM\cdot M^{17/16-\gamma/8}\mathop{\sum_{h_1\sim H}\sum_{h_2\sim H}\sum_{k_1\sim K}\sum_{k_2\sim K}}
            _{HH_1^{-1}M^{-\gamma}\ll|\lambda|\leqslant 4HK^\gamma Q^{-1}}|\lambda|^{-1/8}
                   \nonumber \\
\ll & \,\, (\log X)QM^{33/16-\gamma/8}\max_{HH_1^{-1}M^{-\gamma}\leqslant\Delta\leqslant4HK^{\gamma}Q^{-1}}
           \mathscr{N}(2\Delta)\cdot\Delta^{-1/8}
                   \nonumber \\
\ll & \,\, (\log X)QM^{33/16-\gamma/8}\max_{HH_1^{-1}M^{-\gamma}\leqslant \Delta \leqslant 4HK^{\gamma}Q^{-1}}
           (\Delta^{7/8}HK^{2-\gamma}+\Delta^{-1/8} HK\log X)
                   \nonumber \\
\ll & \,\, (\log X)^2QM^{33/16-\gamma/8}\big((HK^{\gamma}Q^{-1})^{7/8}HK^{2-\gamma}
           +(HH_1^{-1}M^{-\gamma})^{-1/8} HK\big)
                   \nonumber \\
\ll & \,\, (\log X)^2\big(Q^{1/8}H^{15/8}M^{33/16-\gamma/8}K^{2-\gamma/8}+QHM^{33/16}K \big)
                   \nonumber \\
\ll & \,\, (\log X)^2\big(Q^{1/8}H^{15/8}X^{2-\gamma/8}M^{1/16}+QHXM^{17/16}\big).
\end{align}
According to \eqref{eq4_5}--\eqref{eq4_10}, we derive that
\begin{align}\label{eq4_11}
(\log X)^{-2}|\mathfrak{S}|^2
 \ll & \,\, Q^{-1/14}H^{29/14}X^{2+\gamma/14}M^{-2/7} + H^{2}X^{2}M^{-1/8}
              \nonumber \\
     & \,\, +QHXM^{17/16} + Q^{1/8}H^{15/8}X^{2-\gamma/8}M^{1/16}.
\end{align}
By Lemma \ref{Optimization-Principle} and \eqref{eq4_11}, we choose an optimal $Q\in[1,HK]$ and deduce that
\begin{equation*}
 X^{-\varepsilon}\cdot|\mathfrak{S}|^2\ll H^2 X^{29/15+\gamma/15}M^{-47/240}+H^2X^2 M^{-1/8}
 +H^{15/8}X^{2-\gamma/8}M^{1/16} + HXM^{17/16}.
\end{equation*}
Hence, for $H\leqslant H_0$, we obtain that
\begin{align}\label{eq4_12}
           X^{-\varepsilon}\cdot |S_{II}(M,K;X)|
\ll & \,\, \min\bigg\{1, \frac{H_1}{H}\bigg\}\big(H X^{29/30+\gamma/30}M^{-47/480} + HX M^{-1/16}
               \nonumber \\
    & \hspace{7em}+H^{15/16}X^{1-\gamma/16}M^{1/32} + H^{1/2}X^{1/2}M^{17/32}\big)
               \nonumber \\
\ll & \,\, H_1 X^{29/30+\gamma/30}M^{-47/480}+H_1X M^{-1/16}
               \nonumber \\
    & \hspace{2em} + H_1^{15/16}X^{1-\gamma/16}M^{1/32} + H_1^{1/2}X^{1/2}M^{17/32},
\end{align}
which combined with \eqref{eq4_3} and \eqref{eq4_4} yields
\begin{equation*}
S_{II}(M,K;X)\ll X^{1-\Delta-7\varepsilon}.
\end{equation*}
This completes the proof of Proposition \ref{Proposition4_2}.
\end{proof}

\section{Proof of Proposition 3.1}
Suppose that $\gamma\in(1/2,1)$ and $\Delta\in[0,1/2)$, which satisfy the condition of Proposition \ref{Proposition3_1}, i.e.,
\begin{equation*}
\frac{80}{3}(1-\gamma)+\frac{80}{3}\Delta<1.
\end{equation*}
Let
\begin{align*}
         \mathfrak{a}
= & \,\, \min\bigg\{\frac{3}{4}-\frac{7}{2}(1-\gamma)-\frac{15}{4}\Delta-27\varepsilon,
         1-8(1-\gamma)-8\Delta-56\varepsilon\bigg\},
               \nonumber \\
\mathfrak{b} = & \,\, 16(1-\gamma)+16\Delta+112\varepsilon,
                \nonumber \\
\mathfrak{c} = & \,\, \min\bigg\{\frac{16}{17}-\frac{16}{17}(1-\gamma)-\frac{32}{17}\Delta-\frac{224}{17}\varepsilon, \,\, 2-32(1-\gamma)-32\Delta-224\varepsilon\bigg\}.
\end{align*}
Then, we have
\begin{equation*}
\mathfrak{b}<2/3,\quad 1-\mathfrak{c}<\mathfrak{c}-\mathfrak{b},\quad \mathfrak{b}<\mathfrak{a}.
\end{equation*}
According to Heath--Brown's identity (Lemma \ref{Heath-Brown-identity} with $k=3$), $\mathscr{S}(X;\alpha,H,\gamma)$ can be written as linear combination of $O(\log^6 X)$ sums, each of which is of the form
\begin{align*}
\mathcal{T}^*(K_1,\dots,K_6)
:= & \,\, \min\bigg\{1,\frac{H_1}{H}\bigg\}\times\sum_{h\sim H}\Bigg|
       \mathop{\sum_{k_1\sim K_1}\!\!\cdots\!\!\sum_{k_6\sim K_6}}_{k_1\dots k_6 \sim X}(\log k_1)\mu(k_4)\mu(k_5)\mu(k_6)
          \nonumber \\
   & \,\, \hspace{9em}\times e\big((k_1\dots k_6)^3\alpha + h(k_1\dots k_6)^{\gamma}\big)\Bigg|,
\end{align*}
where $K_1\dots K_6\asymp X$, $K_j\leqslant (2X)^{1/3},\,(j=4,5,6)$, and some $k_i$ may only take value $1$. Therefore, it is sufficient for us to give upper bound estimates as follows
\begin{equation*}
\mathcal{T}^*(K_1,\dots,K_6) \ll X^{1-\Delta-7\varepsilon}.
\end{equation*}
Next, we shall consider three cases.

\noindent
\textbf{Case 1.} If there exists a $K_i$ such that $K_i \geqslant (2X)^{1-\mathfrak{b}}$, then there must hold $i\leqslant3$ for the fact that $K_j\ll X^{1/3}$ with $j=4,5,6$. Let
\begin{equation*}
m=\prod_{\substack{1\leqslant j\leqslant6\\ j\neq i}} k_j, \quad k=k_i,
\quad M=\prod_{\substack{1\leqslant j\leqslant6\\j\neq i}} K_j, \quad K=K_i.
\end{equation*}
In this case, we can see that $\mathcal{T}^*(K_1,\dots,K_6)$ is a sum of ``Type I'', i.e., $S_{I} (M,K;X)$, subject
to $1\ll M \ll X^\mathfrak{b}\ll X^\mathfrak{a}$. By proposition \ref{Proposition4_1}, we derive that
\begin{equation*}
\mathcal{T}^*(K_1,\dots,K_6)\ll X^{1-\Delta-7\varepsilon}.
\end{equation*}

\noindent
\textbf{Case 2.} If there exists a $K_i$ such that $X^{1-\mathfrak{c}}\ll K_i\ll X^{1-\mathfrak{b}}$, then we take
\begin{equation*}
m=\prod_{\substack{1\leqslant j\leqslant6\\ j\neq i}} k_j, \quad k=k_i,
\quad M=\prod_{\substack{1\leqslant j\leqslant6\\ j\neq i}} K_j, \quad K=K_i.
\end{equation*}
Thus, $\mathcal{T}^*(K_1,\dots,K_6)$ is a sum of ``Type II'', i.e., $S_{II} (M,K;X)$, subject to
$X^\mathfrak{b}\ll M\ll X^\mathfrak{c}$. By Proposition \ref{Proposition4_2}, we derive that
\begin{equation*}
\mathcal{T}^*(K_1,\dots,K_6)\ll X^{1-\Delta-7\varepsilon}.
\end{equation*}

\noindent
\textbf{Case 3.} If $K_i<X^{1-\mathfrak{c}}\, (i=1,2,\dots,6)$, without loss of generality, we postulate that $K_1\geqslant K_2\geqslant\dots\geqslant K_6$. Denote by $\ell$ the natural number such that
\begin{equation*}
K_1\dots K_{\ell-1}<X^{1-\mathfrak{c}}, \qquad\quad K_1\dots K_{\ell}\geqslant X^{1-\mathfrak{c}}.
\end{equation*}
Since $K_1<X^{1-\mathfrak{c}}$ and $K_6<X^{1-\mathfrak{c}}$, then $2\leqslant\ell\leqslant5$. Therefore, there holds
\begin{equation*}
X^{1-\mathfrak{c}}\leqslant K_1\dots K_{\ell}=(K_1\dots K_{\ell-1})\cdot K_{\ell}<
X^{1-\mathfrak{c}}\cdot X^{1-\mathfrak{c}}< X^{1-\mathfrak{b}}.
\end{equation*}
Let
\begin{equation*}
m=\prod_{j= \ell+1}^6 k_j, \quad k=\prod_{j=1}^{\ell} k_j, \quad M=\prod_{j= \ell+1}^6 K_j, \quad K= \prod_{j=1}^{\ell}K_j.
\end{equation*}
At this time, $\mathcal{T}^*(K_1,\dots,K_6)$ is a sum of ``Type II'', i.e., $S_{II} (M,K;X)$, subject to
$X^\mathfrak{b}\ll M\ll X^\mathfrak{c}$. By proposition \ref{Proposition4_2}, we derive that
\begin{equation*}
\mathcal{T}^*(K_1,\dots,K_6)\ll X^{1-\Delta-7\varepsilon}.
\end{equation*}
Consequently, we obtain
\begin{equation*}
\mathscr{S}(X;\alpha,H,\gamma)\ll X^{1-\Delta-7\varepsilon}\cdot (\log X)^6 \ll X^{1-\Delta-6\varepsilon}.
\end{equation*}
This completes the proof of Proposition \ref{Proposition3_1}.

\section*{Acknowledgement}

The authors would like to appreciate the referee for his/her patience in refereeing this paper.
This work is supported by Beijing Natural Science Foundation (Grant No. 1242003), and
the National Natural Science Foundation of China (Grant Nos. 12471009, 12301006, 11901566, 12001047).

\end{document}